\theoremstyle{plain}
\newtheorem{x}{X}[section]
\newtheorem{lem}[x]{Lemma}
\newtheorem{thm}[x]{Theorem}
\newtheorem*{ulthm}{Theorem}
\newtheorem{cor}[x]{Corollary}
\newtheorem{prop}[x]{Proposition}
\theoremstyle{definition}
\newtheorem{defn}[x]{Definition}
\newtheorem{ex}[x]{Example}
\theoremstyle{remark}
\newtheorem{rem}[x]{Remark}
\newtheorem*{ulrem}{Remark}
\newtheorem{notation}[x]{Notation}
\DeclareMathOperator{\bc}{\mathbf{c}}
\DeclareMathOperator{\CF}{CF}
\DeclareMathOperator{\Irr}{Irr}
\DeclareMathOperator{\diag}{diag}
\DeclareMathOperator{\ch}{ch}
\DeclareMathOperator{\im}{Im}
\DeclareMathOperator{\id}{id}
\DeclareMathOperator{\ev}{ev}
\DeclareMathOperator{\Ker}{Ker}
\DeclareMathOperator{\tr}{tr}
\newcommand{\Hbc}{\mathsf{H}_{\bc}}
\newcommand{\rHbc}{\overline{\mathsf{H}}_{\bc}}
\newcommand{\ple}{\hspace{-0.3em}}
\newcommand{\ssub}{\subseteq}
\newcommand{\bigslant}[2]{{\raisebox{.2em}{$#1$}\left/\raisebox{-.2em}{$#2$}\right.}}
\newcommand{\inv}{^{-1}}
\newcommand{\thh}{^{\text{th}}}
\newcommand{\bmu}{\boldsymbol{\mu}}
\newcommand{\brho}{\boldsymbol{\rho}}
\newcommand{\blambda}{\boldsymbol{\lambda}}
\newcommand{\calG}{\mathcal{G}}
\newcommand{\calS}{\mathcal{S}}
\newcommand{\frh}{\mathfrak{h}}
\newcommand{\frS}{\mathfrak{S}}
\newcommand{\bbC}{\mathbb{C}}
\newcommand{\bbN}{\mathbb{N}}
\newcommand{\bbQ}{\mathbb{Q}}
\newcommand{\bbZ}{\mathbb{Z}}
\newcommand{\scrP}{\mathscr{P}}
\title[Wreath Macdonald Polynomials at $\MakeLowercase{q}=\MakeLowercase{t}$ as characters]{Wreath Macdonald Polynomials at $q=t$ \\ as Characters of Rational Cherednik Algebras}
\author{Dario Mathi\"a and Ulrich Thiel}
\address{Fachbereich Mathematik, Technische Universit\"at Kaiserslautern, 67663 Kaiserslautern, Germany}
\email{mathiae@mathematik.uni-kl.de}
\email{thiel@mathematik.uni-kl.de}
\date{December 20, 2021}
\begin{document}

	\maketitle
	\thispagestyle{empty}

	\begin{abstract}
		Using the theory of Macdonald \cite{macdonald2015symmetric}, Gordon \cite{gordon2003baby} showed that the graded characters of the simple modules for the restricted rational Cherednik algebra by Etingof and Ginzburg \cite{etingof2002symplectic} associated to the symmetric group $\frS_n$ are given by plethystically transformed Macdonald polynomials specialized at $q=t$. We generalize this to restricted rational Cherednik algebras of wreath product groups $C_\ell \wr \frS_n$ and prove that the corresponding characters are given by a specialization of the wreath Macdonald polynomials defined by Haiman in \cite{haiman2003combinatorics}.
	\end{abstract}

	\section*{Introduction}

	In 1882, Kostka \cite{kostka1882ueber} studied base changes between certain bases of symmetric polynomials to obtain what is known today as the \textit{Kostka number} $K_{\mu\lambda} \in \bbN$ attached to two partitions $\lambda,\mu$. In the second half of the $20\thh$ century, the Kostka numbers were generalized to a $t$-analogue $K_{\mu\lambda}(t) \in \bbN[t]$ called the \textit{Kostka--Foulkes polynomial} (see \cite{foulkes1974survey}, \cite{lascoux1978sur}). The polynomial $K_{\mu\lambda}(t)$ is a generalization in the sense that we re-obtain the Kostka numbers by specializing $K_{\mu\lambda}(t)$ at $t=1$. Macdonald \cite{macdonald1988new} then generalized this even further to a $q,t$-analogue $K_{\mu\lambda}(q,t) \in \bbN[q,t]$ called the \textit{Kostka--Macdonald coefficient} by making the far-reaching discovery of the Macdonald symmetric polynomial $P_\lambda(x;q,t)$ (see \cite[VI.4]{macdonald2015symmetric} for a precise definition). One of the main properties of the Macdonald polynomial is that it specializes to all the previous settings and more, in particular we have $K_{\mu\lambda}(0,t) = K_{\mu\lambda}(t)$. The fact that the Kostka--Macdonald coefficients are polynomials with nonnegative integer coefficients is called Macdonald positivity. It was conjectured by Macdonald and proven by Haiman \cite{haiman2001hilbert} using the transformed Macdonald polynomial $H_\lambda(x;q,t)$.

	At each stage, deep connections were made between the various specializations of Macdonald polynomials and representation-theoretic objects (see for example  \cite{nelsen2003kostka}, \cite{haiman2002notes}). The one that is of particular interest to us has been made by Gordon \cite{gordon2003baby} for $q=t$ and concerns graded characters of restricted rational Cherednik algebras. First, Etingof and Ginzburg \cite{etingof2002symplectic} have defined for a complex reflection group $W$---so, in particular for the symmetric group---the \textit{rational Cherednik algebra} $\Hbc(W)$.
	This is an infinite-dimensional $\mathbb{C}$-algebra which is a ``rational degeneration'' of the double affine Hecke algebra by Cherednik \cite{cherednik1992double}
	 that was used in proving, e.g., the Macdonald constant term conjecture \cite{cherednik1995double}. Here, $\bc$ is a parameter from a complex vector space of dimension equal to the number of conjugacy classes of reflections. We note that in \cite{etingof2002symplectic} the rational Cherednik algebra is more generally defined for an additional parameter $t \in \mathbb{C}$ (that is not to be confused with the $t$ in the Kostka--Macdonald coefficients!) and we consider the case $t=0$ here.

	The algebra $\Hbc(W)$ has a large center and admits a finite-dimensional quotient $\rHbc(W)$ called the \textit{restricted} rational Cherednik algebra. We will give a brief review of these algebras in Section \ref{sec:rrca} but their key properties are as follows: they are naturally $\mathbb{Z}$-graded and admit a triangular decomposition that leads to a theory of (graded) standard modules $M(\lambda)$ indexed by the irreducible complex characters $\lambda$ of $W$, each $M(\lambda)$ has a simple head $L(\lambda)$, and the graded shifts of the $L(\lambda)$ give all the graded simple $\rHbc(W)$-modules. Moreover, the group algebra $\mathbb{C}W$ is naturally a subalgebra of $\rHbc(W)$ concentrated in degree zero and therefore $L(\lambda)$ can be considered as a graded $W$-module. In particular, we can write
		\begin{equation}
			\lbrack L(\lambda) \rbrack = \sum_{\mu \in \Irr(W)} a_{\mu\lambda}(t) \cdot \lbrack \mu \rbrack
		\end{equation}
		in the graded Grothendieck group of $W$, for some $a_{\mu \lambda}(t) \in \mathbb{N} \lbrack t \rbrack$. We call this expression the \textit{graded character} of $L(\lambda)$. In case of the symmetric group $W=\frS_n$, where the irreducible complex characters are in bijection with partitions of $n$, Gordon has proven in \cite[Thm. 6.4$(ii)$]{gordon2003baby} that for $\bc \neq 0$ we have
	\begin{equation}
		[L(\lambda)] = \sum_{\mu} K_{\mu\lambda}(t,t) \cdot [\mu] \label{eq:gor}
	\end{equation}
	(see Section \ref{sec:rrca} for a detailed account). Using Haiman's transformed Macdonald polynomials \cite[Sec. 2.1]{haiman2001hilbert}, we can write \eqref{eq:gor} as
	\begin{equation} \label{gordon:s_n}
		\ch L(\lambda) = t^{b(\lambda)} \cdot H_\lambda(x;t,t\inv) \;,
	\end{equation}
	where $b(\lambda) = \sum_i (i-1) \lambda_i$ and $\ch L(\lambda)$ denotes image of the $W$-character of $L(\lambda)$ under the Frobenius character map, see Sections \ref{sec:A} and \ref{sec:rrca}.\\

	In the early 2000's, Haiman \cite{haiman2003combinatorics} conjectured, and Bezrukavnikov--Finkelberg \cite{bezrukavnikov2014wreath} later proved, the existence of {\it wreath Macdonald polynomials} $H_{\blambda}(x;q,t)$ associated to a multipartition $\blambda$, i.e. an $\ell$-tuple of partitions that all sum up to $n$. The background of this extension is the geometry of (resolutions of) the variety $(\mathfrak{h} \oplus \mathfrak{h}^*)/W$, where $W$ is the wreath product $C_\ell \wr \frS_n$ acting naturally on $\mathfrak{h} = \mathbb{C}^n$. Such multipartitions $\blambda$ parametrize the irreducible complex characters of $C_\ell \wr \frS_n$ (see Section \ref{sec:multi}). The group $C_\ell \wr \frS_n$ is a complex reflection group as well and so we can consider its restricted rational Cherednik algebra. In contrast to the symmetric group case the parameter space of $\bc$ is in general not one-dimensional anymore. But there is a notion of \textit{generic} parameters which can be defined as lying outside a finite set of explicitly known hyperplanes (see Section \ref{sec:generic}). Our main result (Theorem \ref{cor:Haiman}), generalizing \eqref{gordon:s_n}, is:

	\begin{ulthm}
		Let $W = C_\ell \wr \frS_n$ and let $\bc$ be a generic parameter. Then for the simple $\rHbc(W)$-module $L(\blambda)$ associated to an $\ell$-multipartition $\blambda$ of $n$, we have
		\begin{equation}
			\ch L(\blambda) = t^{b(\blambda^*)} \cdot H_{\blambda}(x;t,t\inv)\;.
		\end{equation}
	\end{ulthm}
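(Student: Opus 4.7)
The plan is to identify $\ch L(\blambda)$ with $t^{b(\blambda^*)} H_{\blambda}(x;t,t\inv)$ via the uniqueness characterization of wreath Macdonald polynomials due to Haiman and Bezrukavnikov--Finkelberg. That characterization asserts that $H_{\blambda}(x;q,t)$ is the unique symmetric function satisfying (i) a triangularity condition with respect to two orderings on $\ell$-multipartitions arising from the $\ell$-quotient/core combinatorics, together with (ii) an explicit normalization of a distinguished leading coefficient. My strategy is therefore to show that, after specializing $q=t$ and substituting $t \mapsto t\inv$, the graded Frobenius character of $L(\blambda)$ satisfies these same two properties, up to the stated prefactor $t^{b(\blambda^*)}$; uniqueness then forces equality.

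To compute $\ch L(\blambda)$ I would start from the standard module $M(\blambda)$, whose graded Frobenius character is immediate from the triangular decomposition of $\rHbc(W)$: as a graded $\bbC W$-module, $M(\blambda) \cong \bbC[\frh] \otimes \blambda$. Since $L(\blambda)$ is the simple head of $M(\blambda)$, one has
\begin{equation*}
\ch L(\blambda) = \ch M(\blambda) - \ch\bigl(\operatorname{rad} M(\blambda)\bigr),
\end{equation*}
so the task is to control the composition factors of the radical. Under the genericity hypothesis on $\bc$, the block structure of $\rHbc(C_\ell \wr \frS_n)$ is well understood via the description of the Calogero--Moser space as a Nakajima quiver variety for the cyclic quiver. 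Combined with an induction on the dominance order, I expect this to imply that every composition factor $L(\bmu)$ of $\operatorname{rad} M(\blambda)$ satisfies $\bmu \vartriangleleft \blambda$, which is the content of (i). The normalization (ii) should then follow by reading off the coefficient of the leading modified Schur function directly from the degree-zero component of $M(\blambda)$, which is simply the $\blambda$-isotype.

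The main obstacle is establishing the sharp dominance triangularity at generic $\bc$: although the block decomposition restricts which $\bmu$ can occur in $\ch L(\blambda)$, extracting the precise $\vartriangleleft$-inequality calls for a genuine geometric input, analogous to Haiman's use of the $n!$-theorem in the symmetric group case. The natural source is the Bezrukavnikov--Finkelberg theorem identifying $H_{\blambda}(x;q,t)$ with the bi-graded character of the fibres of the Procesi bundle on the Nakajima resolution of $(\frh \oplus \frh^*)/W$. Matching that $K$-theoretic statement with the Cherednik grading on $\rHbc(W)$ is the delicate step, and I expect it is precisely the discrepancy between these two gradings that produces the normalizing factor $t^{b(\blambda^*)}$, just as $t^{b(\lambda)}$ arises in Gordon's symmetric-group formula \eqref{gordon:s_n}.
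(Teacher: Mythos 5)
Your high-level strategy is right — use the uniqueness characterization of the wreath Macdonald polynomial and verify that $\ch L(\blambda)$ (up to a power of $t$) satisfies its defining conditions — and that is indeed the skeleton of the paper's proof of this theorem. But the route you propose for establishing the triangularity condition does not match what is actually needed, and it misses the paper's crucial intermediate result.

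First, you frame the triangularity condition as a strict dominance inequality $\bmu \vartriangleleft \blambda$ on composition factors of $\operatorname{rad} M(\blambda)$, to be proved by block theory and an induction on dominance. This misreads the situation in two ways. The wreath Macdonald characterization involves two \emph{different} orderings (coming from $\ell$-core/$\ell$-quotient combinatorics, encoded in the plethysms $\Phi_q^{-1}$ and $\Phi_{t^{-1}}^{-1}$ of Wen), not the dominance order. More importantly, the paper observes that at the specialization $q \mapsto t$, $t \mapsto t^{-1}$ those two plethysms coincide, so the two triangularity conditions together collapse to the statement that $H_\blambda(x;t,t^{-1})$ lies in the \emph{one-dimensional} space $\bbC(t)\cdot\prod_j s_{\lambda^{(j)}}[Z^{(j)}/(1-t^\ell)]$; there is nothing left to prove about dominance at all. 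The whole content is then to know that $\ch L(\blambda)$ lies in that same one-dimensional space, and that is supplied not by a radical analysis but by Theorem \ref{thm:ttKostka}, i.e.\ the explicit formula $[L(\blambda)]=\sum_{\bmu}K_{\bmu\blambda}(t,t)[\bmu]$ with $G_\blambda(x;t,t)=\prod_j H_{\lambda^{(j)}}(t^\ell)s_{\lambda^{(j)}}[Z^{(j)}/(1-t^\ell)]$. That theorem is the real workhorse, and its proof is a direct character computation (Proposition \ref{prop:12}, Lemma \ref{lem:z}) together with the structural input that at generic $\bc$ the matrix $D_\Delta$ is \emph{diagonal} with entries $\bar f_\lambda(t)$ (Proposition \ref{prop:|W|}) — a much stronger fact than any dominance triangularity, and one that makes the radical analysis you propose both unnecessary and, as stated, too weak.

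Second, the appeal to Bezrukavnikov--Finkelberg via the Procesi bundle and matching the $K$-theoretic bigrading against the Cherednik grading is not how the paper proceeds and would be a substantially harder path; the paper's proof is purely representation-theoretic once one grants the genericity results (smoothness of the Calogero--Moser space is cited from Etingof--Ginzburg, not re-derived geometrically). Finally, your guess about the origin of $t^{b(\blambda^*)}$ is in the right spirit but imprecise: the paper pins down the scalar by the normalization $\langle H_\blambda,\mathbf{1}\rangle=1$, combined with the observation that the trivial representation occurs in $L(\blambda)$ in exactly one degree, namely $b(\blambda^*)$, because $\blambda^*=(\lambda^{(0)},\lambda^{(\ell-1)},\ldots,\lambda^{(1)})$ and $\bar f_\blambda=\bar f_{\blambda^*}$. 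Reading the coefficient off the degree-zero $\blambda$-isotype of $M(\blambda)$, as you suggest, normalizes against $s_\blambda$ rather than against $\mathbf{1}$, which is not the normalization used in Wen's characterization.
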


	The specialization considered here reduces the wreath Macdonald polynomials to a plethystic transformation of the Schur functions, as it does in the traditional case, and therefore has a much simpler structure. This plethysm is used in Section \ref{sec:main} and is obtained by inverting the transformation given by the virtual character $\sum_i (-t)^i \cdot \text{char} \wedge^i \frh$ (see \cite[2.3.6]{wen2019wreath} for more details).\\

	On the Cherednik algebra side there are two natural follow-up questions. The first concerns the groups of type $G(\ell,d,n)$ for $d > 1$. These groups admit a generic case as well, although it is much more difficult to describe as the corresponding Calogero--Moser space is not generically smooth. The other one is in some sense more general than the first: it is the description of graded characters of $\rHbc(W)$ of wreath products at specialized parameter $\bc$. For small $W$ the characters can be computed using CHAMP \cite{thiel2015champ} and we give an example in Section \ref{sec:B}. At special parameter, the simple modules $L(\blambda)$ are not isomorphic to the regular representation of $W$ anymore, which makes them very difficult to model with the theory of symmetric functions. One would need a type of ``truncated'' version of the Macdonald polynomial.

\begin{ulrem}
		Griffeth \cite{griffeth2014macdonald} has made a connection between Macdonald polynomials and characters of the associated graded of irreducible modules for the restricted rational Cherednik algebra with respect to a filtration by total degree. At this time though, it remains unclear how this connects to our setting, i.e. before passing to the associated graded. Moreover, the extension to wreath products remains open in \cite[2.20]{griffeth2014macdonald}.
\end{ulrem}

	\subsection*{Acknowledgements} The first author would like to thank Luc Lapointe for very helpful comments early on in the process, and Joshua Wen for giving critical insight into Haiman's wreath Macdonald polynomials. We also thank Stephen Griffeth for explaining some of his results and Gwyn Bellamy for helpful corrections of an earlier version.

	This work is a contribution to the SFB-TRR 195 'Symbolic Tools in Mathematics and their Application' of the German Research Foundation (DFG).

	\setcounter{tocdepth}{1}
	\tableofcontents

	\section{Review of Macdonald polynomials}

	\label{sec:A}

	We begin by reviewing the basic combinatorial concepts, especially the ring of symmetric functions, the plethystic calculus, and the Macdonald polynomials. We do this in order to explain how to derive \eqref{gordon:s_n} from \eqref{eq:gor} and to prepare our generalization of the combinatorics to wreath products.

	\subsection{Partitions and Young diagrams}

	\label{sec:partitions}

	Fix once and for all some nonnegative integer $n$. We denote by $\scrP(n)$ the set of its partitions which we will identify with their corresponding Young diagrams (as left-justified rows). The partition $\lambda=(\lambda_1 \geq ... \geq \lambda_k > 0)$ has length $k$, denoted by $l(\lambda)$. The \textit{size} of $\lambda$ is the number of its boxes denoted $|\lambda|$ and the empty partition is written as $\emptyset \in \scrP(0)$. We have the \textit{dominance order} $\unlhd$ on $\scrP(n)$ defined by
		\begin{equation}
			\lambda \unlhd \mu \  :\Longleftrightarrow \ \sum_{i=1}^k \lambda_i \leq \sum_{i=1}^k \mu_i \ \text{ for all } k \geq 1\;, \label{eq:semiorder}
		\end{equation}
		where we extend the partition sequences by $0$'s when necessary. The $b$-invariant of a partition $\lambda \in \scrP(n)$ is given by
		\begin{equation}
			b(\lambda) = \sum_{i=1}^{l(\lambda)} (i-1)\cdot \lambda_i\;. \label{eq:binv}
		\end{equation}
	We will extend $b(\cdot)$ to arbitrary vectors $\alpha \in \bbN^k$ for some $k$. Also note that $b(\cdot)$ is often denoted $n(\cdot)$ by many authors, for example \cite{macdonald2015symmetric}.

	\subsection{Symmetric functions and plethysms}

	In this section we want to define Kostka--Macdonald coefficients. We refer to \cite{macdonald2015symmetric} for the general theory of Macdonald polynomials and to \cite{loehr2011computational} for the theoretical underpinnings of plethysms and symmetric functions. 	The latter additionally serves as a more modern treatment of these concepts.

	Let $x_1, ..., x_N$ be indeterminates over $\bbC$ for some fixed integer $N \geq n$. We let the symmetric group $\frS_N$ act on these variables by permuting their indices and extend this action to the polynomial ring $\bbC[x_1, ..., x_N]$. The \textit{ring of symmetric polynomials in $N$ variables} $\Lambda_N$ is the fixed point set of the action of $\frS_N$ on $\bbC[x_1, ..., x_N]$ and we write
		\begin{equation}
			 \Lambda_N = \bbC[x_1, ..., x_N]^{\frS_N}\;. \label{eq:symmpols}
		\end{equation}
		For a nonnegative integer $k$ we denote by $\Lambda_N^k$ the homogeneous symmetric polynomials of degree $k$.

	For $r \in \{1, ..., N\}$ we define the \textit{power-sum symmetric polynomial} by
	\begin{equation}
		p_{r,N} = x_1^r + ... + x_N^r
	\end{equation}
	and their product
	\begin{equation}
		p_{\lambda,N}= \prod_{i=1}^{l(\lambda)} p_{\lambda_i,N}
	\end{equation}
	for a partition $\lambda$ of $n$. A standard fact about symmetric polynomials is that $\Lambda_N$ is isomorphic to a polynomial ring, in particular
		\begin{equation}
			\Lambda_N = \bbC[p_{1,N}, ..., p_{N,N}]\;. \label{eq:LambdaNpoly}
		\end{equation}
	Furthermore, for $k \leq N$ we have
		\begin{equation}
			 \Lambda_N^k = \left<p_{\lambda,N} \mid \lambda \in \scrP(k)\right>_{\bbC} \label{eq:LambdaNk}
		\end{equation}
	as $\bbC$-vector spaces. See \cite[II.2]{macdonald2015symmetric} for an in-depth discussion and proofs.

	The Macdonald polynomials and Kostka--Macdonald coefficients are defined over the rational function field $\bbC(q,t)$ where $q$ and $t$ are indeterminates over $\bbC$. This means we will need to extend scalars in our previous definitions. Let
		\begin{equation}
			\Lambda_N (q,t) := \Lambda_N \otimes_{\bbC} \bbC(q,t)
		\end{equation}
		be the \textit{ring of symmetric polynomials in $N$ variables with parameters $q,t$}. The proofs of the identities \eqref{eq:LambdaNpoly} and \eqref{eq:LambdaNk} work the same.

		We will now define the ring of abstract symmetric functions in order to simplify the construction of the plethysm (see \cite[Sec. 2]{loehr2011computational} for details and proofs). Let $p_r$ for $r \in \bbZ_+$ be indeterminates over $\bbC$. The \textit{ring of (abstract) symmetric functions} is simply defined as the polynomial ring
			\[\Lambda=\bbC[p_1, p_2, ... ]\;.\]
		We set $\deg(p_r)=r$ and refer to the  $p_r$ as the \textit{(abstract) power-sum symmetric function of degree $r$} for $r \in \bbZ_+$. We again denote its homogeneous degree $k$ piece by $\Lambda^k$. By {\cite[Sec. 2.1]{loehr2011computational} there exists for a fixed $N \in \bbN$ an evaluation homomorphism
		\begin{equation}
		\begin{array}{rcl}
 			\ev_N : \Lambda & \to & \Lambda_N \\
 			p_r & \mapsto & p_{r,N}
 		\end{array}
		\end{equation}
 	that restricts to a vector space isomorphism $\Lambda^k \overset{\sim}{\longrightarrow} \Lambda_N^k$ for all $k \leq N$. For $f \in \Lambda$ we write $f(x_1, ..., x_N)$ for $\ev_N(f)$. Because of their isomorphic properties, we will often use $\ev_N$ implicitly and identify a symmetric function with its polynomial image.

 	We denote by
		\begin{equation}
			 \cdot   [   \cdot   ] : \Lambda \times \Lambda \to \Lambda
		\end{equation}
		the \textit{plethysm} or \textit{plethystic substitution} which is uniquely determined by the following properties:
		\begin{enumerate}
			\item For all $r_1, r_2 \in \bbZ_+$ we have $p_{r_1} [ p_{r_2}] = p_{r_2} [ p_{r_1}] = p_{r_1\cdot r_2}$\;,
			\item for all $m \geq 1$, the map \[ L_m : \Lambda \to \Lambda, \ g \mapsto p_m [g]\] is a homomorphism of $\bbC$-algebras,
			\item for all $g \in \Lambda$, the map \[ R_g : \Lambda \to \Lambda, \ f \mapsto f [g]\] is a homomorphism of $\bbC$-algebras.
		\end{enumerate}
	Existence and uniqueness of the plethysm are proven in \cite[Thm. 1]{loehr2011computational}. We record some of its additional properties in the next Lemma.
	\begin{lem}[{\cite[2.3]{loehr2011computational}}]\label{lem:ple}
		For $f,g,h \in \Lambda$, $r \geq 1$ we have
		\begin{enumerate}
			\item $f[g[h]] = (f[g])[h]$,
			\item $p_1[f] = f[p_1] = f$
			\item $(p_r [g])(x_1, ..., x_N) = g(x_1^r, ..., x_N^r)$,
		\end{enumerate}
	\end{lem}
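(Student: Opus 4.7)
My plan is to exploit the fact that $\Lambda = \bbC[p_1, p_2, \ldots]$ is a polynomial ring on the power sums, together with the three characterizing properties of plethysm. Since each identity asserts the equality of two maps that are built out of algebra homomorphisms, it will always suffice to verify the identities on the power-sum generators $p_r$. All three parts will be proven by repeatedly collapsing expressions onto the rule $p_{r_1}[p_{r_2}] = p_{r_1 r_2}$.

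For part (2), I would observe that by defining property (2), the map $L_1 \colon \Lambda \to \Lambda,\ g \mapsto p_1[g]$, is a $\bbC$-algebra homomorphism; it agrees with $\id_\Lambda$ on each generator since $p_1[p_r] = p_{1 \cdot r} = p_r$, so $L_1 = \id_\Lambda$. Symmetrically, defining property (3) makes $R_{p_1} \colon f \mapsto f[p_1]$ an algebra homomorphism, and $p_r[p_1] = p_r$ shows $R_{p_1} = \id_\Lambda$. For part (3), I would note that both $f \mapsto (p_r[f])(x_1,\ldots,x_N)$ (i.e., $\ev_N \circ L_r$) and $f \mapsto f(x_1^r,\ldots,x_N^r)$ (i.e., $\ev_N$ followed by the substitution $x_i \mapsto x_i^r$) are $\bbC$-algebra homomorphisms $\Lambda \to \Lambda_N$. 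On the generator $p_s$, the first yields $p_{rs}(x_1,\ldots,x_N) = \sum_i x_i^{rs}$, while the second yields $p_s(x_1^r,\ldots,x_N^r) = \sum_i x_i^{rs}$. The two maps therefore agree on generators, hence coincide.

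For part (1), the associativity identity, I would carry out a two-step reduction to the power-sum case. Fix $g, h \in \Lambda$. Both $f \mapsto f[g[h]]$ and $f \mapsto (f[g])[h] = R_h(R_g(f))$ are $\bbC$-algebra homomorphisms in $f$ by defining property (3), so it suffices to prove the identity for $f = p_r$. Now fix $r$ and $h$, and regard both sides as functions of $g$: the left side is $L_r \circ R_h$ and the right side is $R_h \circ L_r$, each a composition of algebra homomorphisms (by defining properties (2) and (3)). It therefore suffices to check equality on $g = p_s$, that is, to show $p_r[p_s[h]] = (p_r[p_s])[h] = p_{rs}[h]$ for all $h$. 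This is the identity $L_r \circ L_s = L_{rs}$, and since both sides are algebra homomorphisms in $h$, we need only check on $h = p_t$, where both equal $p_{rst}$ by defining property (1).

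The main technical obstacle is the nested plethysm in part (1), where one has to be disciplined about which variable is being held fixed at each reduction step in order to invoke the correct homomorphism property (property (2) versus property (3)); once the associativity is reduced to the identity $L_r \circ L_s = L_{rs}$ on generators, everything else is routine. Parts (2) and (3) are essentially immediate applications of the ``check on power sums'' principle.
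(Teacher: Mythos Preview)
Your argument is correct. Each of the three reductions is legitimate: in every case you identify two $\bbC$-algebra homomorphisms out of $\Lambda$ (or into $\Lambda_N$) and verify that they agree on the generating set $\{p_r\}_{r\geq 1}$, which determines them uniquely since $\Lambda=\bbC[p_1,p_2,\ldots]$ is a free polynomial algebra. The two-step reduction for associativity (first in $f$ via property~(3), then in $g$ via properties~(2) and~(3), finally in $h$) is carried out carefully, and the identification $L_r\circ L_s=L_{rs}$ on generators is exactly defining property~(1).

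Note, however, that the paper does not supply its own proof of this lemma: it is stated with a citation to \cite[2.3]{loehr2011computational} and no proof environment follows. So there is no ``paper's proof'' to compare against. Your write-up is precisely the standard argument one finds in that reference, namely the ``check on power sums'' principle applied systematically; nothing different is required or expected here.
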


	Following \cite[Ex. 1]{loehr2011computational}, we extend the plethystic operation to $\Lambda(q,t)$ by demanding that the indeterminates $q,t$ are treated the same as the $x_i$, i.e. we have
	\begin{equation}
		p_r[t] = t^r\;, \ p_r [q] = q^r\;.
	\end{equation}
	Note that this means that the plethysm is no longer a morphism of $\bbC(q,t)$-algebras but only of $\bbC$-algebras. This is illustrated by the following example.

	\begin{ex}
		Let $c \in \bbC$, $r,s \in \bbZ_+$. We can calculate in $\Lambda(q,t)$
		\begin{equation}
			 p_r \left[c\cdot t^2\cdot p_1 - \frac{1}{1-q} \cdot p_s\right] = c \cdot t^{2r} \cdot  p_r - \frac{1}{1-q^r} \cdot p_{sr}\;.
		\end{equation}
		When using the evaluation homomorphism we have
		\begin{equation}
			p_r [c \cdot t \cdot p_1](x_1, ..., x_N) = c \cdot t^r \cdot p_r (x_1, ..., x_N)= c \cdot t^r (x_1^r + ... + x_N^r)
		\end{equation}
		in $\Lambda_N(q,t)$.
	\end{ex}

	Since $0,1 \in \Lambda$ are symmetric functions as well, the algebra homomorphism property of the plethysm tells us that $p_r[0]=0$ and $p_r[1]=1$.

	\begin{notation}
		It is common to write the evaluation homomorphism $\ev_N$ as a plethystic substitution, namely as $f[X]$ for $f \in \Lambda$ where we define
	\begin{equation}
		 X := x_1 + ... + x_N = p_{1,N}\;.
	\end{equation}
	\end{notation}

	\subsection{Macdonald polynomials}

	We are going to reiterate the notations and results found in \cite{macdonald2015symmetric}. For a partition $\lambda$, let $s_\lambda \in \Lambda$ be its Schur function. We have
		\begin{equation}
			s_\lambda = \sum_{\mu \in \scrP(n)} z_\mu\inv \cdot \chi_\mu^\lambda \cdot p_\mu \label{eq:s<->p}
		\end{equation}
		where $\chi_\mu^\lambda$ is the evaluation of the character corresponding to $\lambda$ at cycle type $\mu$, and the number $z_\mu$ is equal to $\prod_{i\in \bbN} i^{a_i}\cdot a_i!$ with the number $i$ appearing $a_i$ times in $\mu$. For partitions $\lambda,\mu,\nu$ we define the \textit{Littlewood--Richardson coefficient} $c_{\lambda,\mu}^\nu$ by
		\begin{equation}
			s_\lambda \cdot s_\mu = \sum_{\nu} c_{\lambda,\mu}^\nu s_\nu
		\end{equation}
		and endow $\Lambda$ with a scalar product $\left<\cdot,\cdot\right>$ by demanding that the Schur functions form an orthonormal basis.

		Let $\CF(\frS_n)$ denote the space of complex-valued class functions on $\frS_n$ and let $\chi \in \CF(\frS_n)$. Equation \eqref{eq:s<->p} can be extended linearly to the \textit{Frobenius character map}
		\begin{equation}
			\begin{array}{rcl}
				\ch : \CF(\frS_n) &\to &\Lambda \\
				\chi & \mapsto & \sum_{\mu \in \scrP(n)} z_\mu\inv \cdot \chi_\mu \cdot p_\mu\;. \label{eq:Frob}
			\end{array}
		\end{equation}
		which is an isometry of vector spaces when taking the usual inner product of characters (see for example \cite[Thm 7.3]{fulton1997young} for a proof). Note that we identify an $\frS_n$-module with its character when applying $\ch$. The construction in \eqref{eq:Frob} can be lifted to $\bbC(q,t)$ as well. Lastly, one important identity of the Schur function is
		\begin{equation}
			s_\lambda \ple \left[ \frac{1}{1-t}\right] = t^{b(\lambda)} \cdot H_\lambda\inv(t) \label{eq:shook}
		\end{equation}
		with
		\begin{equation}
			H_\lambda(t) = \prod_{(i,j)\in \lambda} (1-t^{h(i,j)}) \label{eq:hook}
		\end{equation}
		where $h(i,j)$ is 1 plus the of boxes to the right and below the box with (matrix) coordinates $(i,j)$ in the Young diagram of $\lambda$. The polynomial $H_\lambda(t)$ is called the \textit{hook polynomial} of $\lambda$ (see \cite[I.3 Ex.2]{macdonald2015symmetric}.\\

		Denote by $P_\lambda(x;q,t)$ the \textit{Macdonald symmetric function} as defined in \cite[VI.4]{macdonald2015symmetric} and by $J_\lambda(x;q,t)$ its integral version           \cite[VI.8]{macdonald2015symmetric}. Furthermore denote by $H_\lambda(x;q,t)$ the transformed version of Haiman given for example in \cite[Sec. 2.1]{haiman2001hilbert}. We have the relationship
		\begin{equation}
			H_\lambda(x;q,t) = t^{b(\lambda)}\cdot J_\lambda(x;q,t\inv)\left[\frac{X}{1-t\inv}\right]\;.
		\end{equation}
		The \textit{$q,t$-Kostka--Macdonald coefficients} are given by
		\begin{equation}
			t^{b(\lambda)}\cdot H_\lambda(x;q,t\inv) = \sum_{\mu \in \scrP(n)} K_{\mu\lambda}(q,t) \cdot s_\mu\;. \label{eq:kostkamac}
		\end{equation}
	The tables of $K_{\mu\lambda}(q,t)$ for $n \leq 6$ can be found in \cite[VI.8]{macdonald2015symmetric}.

	The key aspect of Macdonald polynomials is that they specialize to a wide range of families of symmetric functions. One such specialization is given by setting $q=t$ where we have
	\begin{align*}
		P_\lambda(x;t,t)&=s_\lambda\;, \vphantom{\left[\frac{X}{1-t\inv}\right]}\\
		J_\lambda(x;t,t)&=H_\lambda(t) \cdot s_\lambda\;.
	\end{align*}
	We will mainly be working with
	\begin{equation}
		G_\lambda(x;t,t):= t^{b(\lambda)} \cdot H_\lambda(x;t,t\inv)=H_\lambda(t) \cdot s_\lambda\left[\frac{X}{1-t}\right]\;. \label{eq:G}
	\end{equation}
	The polynomial $G_\lambda(x;t,t)$ will turn out to be the character of an irreducible module of the restricted rational Cherednik algebra which we will define next.

	\section{Restricted rational Cherednik algebras} \label{sec:rrca}

	We give a brief summary of the representation theory of restricted rational Cherednik algebras. For a more detailed discussion we refer the reader to the papers by Etingof--Ginzburg \cite{etingof2002symplectic} and  Gordon \cite{gordon2003baby}, and the survey \cite{thiel2017restricted}.

	\subsection{Complex reflection groups}

	Let $W$ be a complex reflection group acting on a complex reflection representation $\frh$. We denote by $\bbC[\frh]$ the symmetric algebra of $\frh^*$ and by $\bbC[\frh]^W$ the ring of invariants of $W$. Denote by $\bbC[\frh]^W_+$ the set of invariants with no constant term and define the \textit{ring of coinvariants} of $W$ by
	\begin{equation}
		\bbC[\frh]_W = \bigslant{\bbC[\frh]}{\bbC[\frh] \cdot \bbC[\frh]^W_+}\;. \label{eq:coinvslant}
	\end{equation}
	The action of $W$ on  $\bbC[\frh]$ descends to the coinvariant algebra and gives $\bbC[\frh]_W$ the structure of a $W$-module. From the theory of complex reflection groups we know that
		\begin{equation}
			\bbC[\frh]_W \cong \bbC W \label{eq:regrep}
		\end{equation}
	as $W$-modules, where $\bbC W$ is the group ring affording the regular representation of $W$ (see \cite{chriss1997representation}, \cite{steinberg1975theorem} for a discussion and proof). This means that for any irreducible representation $\lambda$ of $W$ its multiplicity inside $\bbC[\frh]_W$ is equal to $\dim(\lambda)$.

	Since the ideal in \eqref{eq:coinvslant} by which we take the quotient has a homogeneous generating set, we obtain a grading of $\bbC[\frh]_W$ as well. Let $G$ be any group. When we talk about \textit{graded} modules of $G$, we mean a $\bbZ$-graded vector space $V$ with a $G$-action that preserves the homogeneous degree pieces of $V$. Equivalently, we can define the group ring $\bbC G$ as a graded ring concentrated in degree 0. The graded irreducibles modules of $G$ are then given by all $\bbZ$-grade shifts of the ungraded irreducibles $G$-modules (which we view as concentrated in degree 0). The \textit{$i$-shift} $U[i]$ of a $\bbZ$-graded vector space $U$ is defined to be the graded vector space for which
	\begin{equation}
		U[i]_j = U_{j-i}
	\end{equation}
	holds. Now we have that all irreducible graded modules of a group $G$ are given by the set
	\begin{equation}
		\{ \lambda[i] \mid \lambda \in \Irr(G), i \in \bbZ\}\;.
	\end{equation}
	Let $\lambda$ be an ungraded complex simple $G$-module and $V$ be any ungraded complex $G$-module. We denote by
	\begin{equation}
		[V:\lambda] \in \bbN
	\end{equation}
	the multiplicity of the character of $\lambda$ inside the character of $V$. If $V$ is a graded $G$-module with homogeneous degree $i$ piece $V_i$ for $i \in \bbZ$, we define the \textit{graded} multiplicity of $\lambda$ in $V$ by
	\begin{equation}
		[V:\lambda]^{\text{gr}} = \sum_{i \in \bbZ} [V_i:\lambda] \cdot t^i \in \bbN[t,t\inv]
	\end{equation}
	for a complex indeterminate $t$.

	Since $\bbC[\frh]_W$ is a graded version of $\bbC W$, it is natural to ask the question in which degree pieces the copies of $\lambda$ appear inside $\bbC[\frh]_W$. This information is captured in the fake degree of $\lambda$.

	\begin{defn}\label{defn:fakedeg}
		Let $\lambda$ be an irreducible representation of a complex reflection group $W$. We define its \textit{fake degree} by
		\begin{equation}
			f_\lambda(t) = [\bbC[\frh]_W : \lambda]^{\text{gr}} \;.
		\end{equation}
		We furthermore define
		\begin{equation}
			\bar f_\lambda(t) := t^{-b}\cdot f_\lambda(t)
		\end{equation}
		where $b$ is the smallest degree appearing with nonzero coefficient in $f_\lambda(t)$. The number $b$ is called \textit{trailing degree} of (the fake degree of) $\lambda$.
	\end{defn}

	We will mostly be interested in wreath products $C_\ell \wr \frS_n$ of a symmetric group $\frS_n$ with a cyclic group $C_\ell$. These groups are complex reflection groups with respect to their natural action on $\mathbb{C}^n$ and in this context they are often denoted by $G(\ell,1,n)$. Note that for $\ell=1$ we simply obtain the symmetric group $\frS_n$.

	\subsection{Restricted rational Cherednik algebras}

	\label{subsec:RRCA}

	Let $(W,\frh)$ be a complex reflection group and let $\calS \ssub W$ be its set of complex (pseudo-)reflections. Let $\frh^*$ denote the dual module of $\frh$. We then define the $W$-module $V:=\frh \oplus \frh^*$ with tensor algebra $T(V)$. The \textit{skew tensor algebra} $T(V) \rtimes W$ is the complex vector space $T(V) \otimes \bbC W$ with multiplication given by
		\begin{equation}
			w \cdot v = w.v \cdot w
		\end{equation}
		for $w \in W$, $v \in T(V)$. The vector space $V$ becomes symplectic by defining the symplectic form
	\begin{equation}
		\omega: V \times V \to \bbC, ((y,x),(y',x')) \mapsto y(x') - y'(x)\;.
	\end{equation}
	For any $s \in \calS$ we define $\omega_s$ to be equal to $\omega$ on the 1-dimensional space $\im(s-\id_V)$ and 0 on $\Ker(s-\id_V)$. Let $\bc : \calS \to \bbC$ be a function with $\bc(s)=\bc(s')$ whenever $s$ and $s'$ are conjugate in $W$. We call $\bc$ equivariant with respect to the conjugacy action of $W$ on $\calS$.

	\begin{defn}[\cite{etingof2002symplectic}]
		The \textit{rational Cherednik algebra for $W$ at parameter $\bc$ and ``at $t=0$''} is defined as
		\begin{equation}
			\Hbc := \  \bigslant{T(V) \rtimes W}{\left\langle[x,y] - \sum_{s\in \calS} \bc(s) \omega_s(x,y)s \mid x \in \mathfrak{h}, y \in \mathfrak{h^*}\right\rangle}\;. \label{eq:Hbc}
		\end{equation}
	\end{defn}

	\begin{rem}
		There is a more general version of this algebra that includes an additional parameter $t$ which in our case is equal to 0. To differentiate these two cases, one writes \mbox{``at $t=0$''} when talking about this specific class of Cherednik algebras. That $t$ is \textit{not to be confused} with the parameter $t$ of our symmetric functions and graded modules.
	\end{rem}

	The rational Cherednik algebra admits a $\bbZ$-grading given by $\deg(\frh^*) = 1, \deg(\frh) = -1$, and $\deg(W) = 0$. We also have a triangular decomposition and so-called PBW property by {\cite[Thm 1.3]{etingof2002symplectic}}, i.e. $\Hbc$ has a graded vector space decomposition
		\begin{equation}
			\Hbc \cong \bbC[\frh] \otimes \bbC W \otimes \bbC[\frh^*]\;.
		\end{equation}

	From the definition of $\omega$ we can tell that $\omega|_{\mathfrak{h}} \equiv \omega|_{\mathfrak{h}^*} \equiv 0$. This holds in particular for all $\omega_s$. Thus the commutator relations vanish and we have
		\begin{equation}
			[x,x']=[y,y']=0
		\end{equation}
		for all $ x,x' \in \frh^*, y,y' \in \frh$. By \cite[Prop. 4.15]{etingof2002symplectic}, we can extend this property multiplicatively to $\bbC[\frh]$ and $\bbC[\frh^*]$ such that we have
		\begin{equation}
			\bbC[\frh]^W \otimes \bbC[\frh^*]^W \ssub Z(\Hbc) \label{eq:rrcaideal}
		\end{equation}
		where $Z(\Hbc)$ denotes the center of $\Hbc$. Using this, one can define a certain finite dimensional quotient of $\Hbc$, called the \textit{restricted rational Cherednik algebra} by
		\begin{equation}
			\rHbc := \bigslant{\Hbc}{A_+ \Hbc}
		\end{equation}
	where $A_+ \ssub \bbC[\frh]^W \otimes \bbC[\frh^*]^W$ is the set of elements with no constant term. The PBW property of $\Hbc$ in {\cite[Thm 1.3]{etingof2002symplectic}} descends to $\rHbc$ such that we get a graded vector space decomposition
		\begin{equation}
			\rHbc \cong \bbC[\frh]_W \otimes \bbC W \otimes \bbC[\frh^*]_W \label{eq:rrcapbw}
		\end{equation}
	where $\bbC[\frh]_W$ is the coinvariant space of \eqref{eq:coinvslant}. Using \eqref{eq:regrep} we get
		\begin{equation}
			\dim(\rHbc) = |W|^3\;.
		\end{equation}
	In particular, the restricted rational Cherednik algebra is finite dimensional.

	Using the methods of \cite{holmes1991brauer} on \eqref{eq:rrcapbw}, Gordon defined in \cite{gordon2003baby} a subalgebra of $\rHbc$ which is generated by the negative degree coinvariants and the elements of $W$ inside $\rHbc$. This gives us
		\begin{equation}
			\mathsf{B}_c := \bbC[\frh^*] \rtimes W \ssub \rHbc\;.
		\end{equation}
		For a $W$-module $\lambda$, we define an action of $q \otimes w \in \mathsf{B}_c$ on $\lambda$ by
		\begin{equation}
			(q \otimes w).v := q(0)\cdot w.v
		\end{equation}
	for all $v \in \lambda$, which turns $\lambda$ into a $\mathsf{B}_{\bc}$-module. Let $\Irr(W)$ be a set of representatives of the isomorphism classes of simple $W$-modules. We identify $\lambda \in \Irr(W)$ with the graded version concentrated in degree 0. We now induce $\lambda \in \Irr(W)$ to
		\begin{equation}
			M(\lambda) := \rHbc \otimes_{\mathsf{B}_{\bc}} \lambda \label{eq:Mdef}
		\end{equation}
	called the \textit{standard} (or \textit{baby Verma}) \textit{module of $\lambda$}. We have
	\begin{equation}
		M(\lambda) = \bbC[\frh]_W \otimes \lambda \label{eq:std}
	\end{equation}
	by the vector space decomposition of $\rHbc$ \eqref{eq:rrcapbw}.

	For any $W$-representation $V$ denote by $[V]$ its isomorphism class in the graded Grothendieck group $\calG_{\text{gr}}(W)$ of $W$. When viewing equation \eqref{eq:std} in $\calG_{\text{gr}}(W)$ we can use the fake degree from Definition \ref{defn:fakedeg} to obtain
	\begin{equation}
			[M(\lambda)] = \sum_{\mu \in \Irr(W)} f_\mu(t) \cdot [\mu \otimes \lambda]\;. \label{eq:Mchar}
	\end{equation}

	From this simple construction of the standard modules we are actually able to give a complete set of pairwise nonisomorphic simple graded modules of $\rHbc$.

	\begin{thm}[{\cite[Prop. 4.3]{gordon2003baby}}]
		Each $\rHbc$-module $M(\lambda)$ has a simple head, denoted by $L(\lambda)$. Furthermore, the set
		\begin{equation}
			\{L(\lambda)[i] \mid \lambda \in \Irr(W), i \in \bbZ \}
		\end{equation}
		is a complete set of pairwise nonisomorphic simple graded $\rHbc$-modules.
	\end{thm}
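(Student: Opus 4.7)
\bigskip

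The proof plan hinges on two features of $\rHbc$ made available by its triangular decomposition \eqref{eq:rrcapbw} and its grading: the standard module $M(\lambda)$ is concentrated in non-negative degrees with degree-zero piece canonically $\lambda$, and $\lambda$ generates $M(\lambda)$ as an $\rHbc$-module. Indeed, under the identification $M(\lambda) \cong \bbC[\frh]_W \otimes \lambda$ from \eqref{eq:std}, the polynomial part $\bbC[\frh]_W$ lives in degrees $\geq 0$ (since $\deg \frh^* = +1$), so $M(\lambda)_0 = \lambda$, and the remaining homogeneous pieces $M(\lambda)_i$ for $i > 0$ are obtained by acting with the positively graded subalgebra $\bbC[\frh]_W$ on $\lambda$.

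For the first claim, I would show directly that $M(\lambda)$ has a unique maximal graded proper submodule. Let $N \subsetneq M(\lambda)$ be a graded $\rHbc$-submodule. If $N$ met the degree-zero part nontrivially, then since $N_0$ is a $\bbC W$-submodule of the simple $W$-module $\lambda$, we would have $N_0 = \lambda$; but $\lambda$ generates $M(\lambda)$, so $N = M(\lambda)$, contradicting properness. Hence every proper graded submodule lies in $\bigoplus_{i>0} M(\lambda)_i$, and thus the sum $\mathrm{rad}(M(\lambda))$ of all such submodules is again proper. The quotient $L(\lambda) := M(\lambda)/\mathrm{rad}(M(\lambda))$ is then the unique simple graded head, with $L(\lambda)_0 = \lambda$.

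For the second claim, I would start from an arbitrary simple graded $\rHbc$-module $L$. Its grading is bounded below: if not, the descending chain of graded submodules generated by $\bigoplus_{i \leq k} L_i$ for $k \to -\infty$ would contradict simplicity (using that $\frh$ acts with degree $-1$). After a grade shift we may assume $L_i = 0$ for $i < 0$ and $L_0 \neq 0$. Since $\deg \frh = -1$, the space $L_0$ is annihilated by $\frh$ and is thus a $\mathsf{B}_\bc$-submodule. Pick any simple $W$-constituent $\lambda \hookrightarrow L_0$; by Frobenius reciprocity (i.e.\ the universal property of induction in \eqref{eq:Mdef}) this extends to a nonzero graded $\rHbc$-map $M(\lambda) \to L$, which must be surjective by simplicity of $L$ and hence factors through $L(\lambda)$, forcing $L \cong L(\lambda)$.

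Finally, pairwise non-isomorphism of the $L(\lambda)[i]$ follows by looking at the lowest nonzero degree of each: $L(\lambda)[i]$ has lowest degree $i$, with lowest-degree $W$-module $\lambda$, so both $\lambda \in \Irr(W)$ and the shift $i \in \bbZ$ are recovered as invariants of the graded isomorphism class. The main conceptual point, and the step that requires the most care, is verifying that $M(\lambda) \twoheadrightarrow L$ construction: this amounts to checking that the $\mathsf{B}_\bc$-action on $L_0$ really is the pullback of the $W$-action on $\lambda$ through evaluation at $0 \in \frh^*$, which is ensured precisely by the degree argument $\frh \cdot L_0 \subseteq L_{-1} = 0$ together with the definition \eqref{eq:Mdef} of $\mathsf{B}_\bc$-modules from $W$-modules.
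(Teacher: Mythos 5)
The paper does not actually prove this statement; it cites it directly from Gordon \cite[Prop.\ 4.3]{gordon2003baby}, whose proof in turn invokes the general theory of graded algebras with triangular decomposition due to Holmes--Nakano \cite{holmes1991brauer}. Your proposal is essentially the standard highest-weight argument that underlies both of those sources, specialized to $\rHbc$: (a) positivity of the grading on $\bbC[\frh]_W$ forces every proper graded submodule of $M(\lambda)$ into strictly positive degrees, so their sum is proper and there is a unique simple graded quotient; (b) the lowest nonzero degree of any simple graded module is killed by $\frh$, hence is a $\mathsf{B}_\bc$-module, and Frobenius reciprocity gives a surjection from a standard module; (c) distinguishing $L(\lambda)[i]$ by the degree and $W$-type of its socle in lowest degree. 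Steps (a) and (c) and the final Frobenius-reciprocity step are correct and match the known proof.

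There is, however, a genuine gap in the one place where you try to supply an argument rather than a citation: the claim that the grading of a simple graded module $L$ is bounded below. As written, the descending chain $N_k = \rHbc \cdot \bigl(\bigoplus_{i \leq k} L_i\bigr)$ gives no contradiction: by graded simplicity each $N_k$ is either $0$ or $L$, and if the grading were unbounded below then $N_k = L$ for all $k$, a perfectly consistent constant chain. Nothing about $\frh$ acting in degree $-1$ rescues this. Fortunately the fix is immediate and much simpler than what you wrote: $\rHbc$ is \emph{finite-dimensional} (the paper records $\dim \rHbc = |W|^3$ from the PBW decomposition \eqref{eq:rrcapbw}), so any graded-simple module is cyclic, generated by a single homogeneous element, hence finite-dimensional, hence has grading bounded in both directions. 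I would replace your chain argument with this observation; otherwise the proof is sound and in line with Gordon's approach.

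One smaller point of precision: rather than saying the surjection $M(\lambda) \twoheadrightarrow L$ ``factors through $L(\lambda)$,'' it is cleaner to say its kernel is a maximal proper graded submodule of $M(\lambda)$, and since you have just shown there is a unique such submodule (the radical), the kernel must equal it, giving $L \cong L(\lambda)$.
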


	Note that we have $L(\lambda[i]) \cong L(\lambda)[i]$ in the construction above. We will focus on the description of $L(\lambda)$ with $\lambda$ concentrated in degree 0. Alternatively, we can characterize $L(\lambda)$ by being concentrated in nonnegative degree and having a copy of $\lambda$ in its degree 0 piece.

	When studying the representation theory of restricted rational Cherednik algebra, we will mainly be interested in the relationship of three classes of modules: standards, simples, and simples of $W$, which are respectively denoted by
	\begin{equation}
		(M(\lambda))_{\lambda\in \Irr(W)}\;, \ \ \ (L(\lambda))_{\lambda\in \Irr(W)}\;, \ \ \ (\lambda)_{\lambda\in \Irr(W)}\;.
	\end{equation}
	When working in the graded Grothendieck group of $W$ we can define transition matrices over $\bbN[t,t\inv]$ between these classes of modules in the respective graded Grothendieck groups.

	\begin{defn}[{\cite[3.7]{bellamy2018highest}}] \label{defn:decomp}
	We define the matrices
	\begin{equation}
		D_\Delta, C_\Delta, C_L \in \bbZ[t,t\inv]^{\Irr(W) \times \Irr(W)}
	\end{equation}
	by
		\begin{align}
			D_\Delta &= \left([M(\lambda):L(\mu)]^{\text{gr}}\right)_{\lambda,\mu \in \Irr(W)} \;,\\
			C_\Delta &= \left([M(\lambda):\mu]^{\text{gr}}\right)_{\lambda,\mu \in \Irr(W)} \;,\\
			C_L &= \left([L(\lambda):\mu]^{\text{gr}}\right)_{\lambda,\mu \in \Irr(W)} \;.
		\end{align}

	\end{defn}
	The three matrices are related by
		\begin{equation}
			C_\Delta = D_\Delta \cdot C_L\;. \label{eq:CD=C}
		\end{equation}
	We are mainly interested in the matrix $C_L$, i.e. the decomposition of simple modules of the restricted rational Cherednik algebra into simple modules of $W$. Because of \eqref{eq:Mdef} we have that $C_\Delta$ specializes to the identity matrix for $t=0$ and is therefore invertible over $\bbQ(t)$. This makes $D_\Delta$ and $C_L$ invertible over $\bbQ(t)$ as well and we can transform \eqref{eq:CD=C} into
		\begin{equation}
			 C_L = D_\Delta\inv \cdot C_\Delta\;. \label{eq:C=DC}
		\end{equation}
	Since the matrix $C_\Delta$ is completely controlled by the fake degrees, which are easy to compute (see Theorem \ref{thm:fakedeg}), the matrices $C_L$ and $D_\Delta$ are in some sense equally difficult to attain.

	\subsection{Generic parameters} \label{sec:generic}

	Even though we dropped this from the notation, the representation theory of $\rHbc$, and thus the decomposition matrices in Definition \ref{defn:decomp}, depend on the parameter $\bc$. In \cite[Sec. 3]{thiel2017restricted} a notion of \textit{generic} parameters was introduced which relies on the general theory in \cite{Thiel-Dec, Thiel-Flat}. Intuitively, the representation theory of $\rHbc$ is ``the same'' for all generic $\bc$. To make this more precise, let $\mathbf{C} := (\mathbf{C}_s)_{s \in \mathcal{S}}$ be a set of indeterminates over $\mathbb{C}$ such that $\mathbf{C}_s = \mathbf{C}_t$ whenever $s$ and $t$ are conjugate. Then one can define the restricted rational Cherednik algebra also over the rational function field $\mathbb{C}(\mathbf{C})$. Let us denote this (generic) algebra by $\overline{\mathsf{H}}$. It follows from \cite{Thiel-Flat} that the blocks of $\rHbc$ are unions of blocks of $\overline{\mathsf{H}}$. We thus call $\bc$ {\it block-generic} if the blocks coincide. On the other hand, it follows from \cite{Thiel-Dec} that there is a map, called {\it decomposition map}, from the (graded) Grothendieck group of $\overline{\mathsf{H}}$ to the one of~$\rHbc$ which is given by choosing and reducing lattices of simple $\overline{\mathsf{H}}$-modules over a (localization of) the polynomial ring $\mathbb{C} \lbrack \mathbf{C} \rbrack$. A parameter $\bc$ is called {\it decomposition-generic} if this map is a permutation. For decomposition-generic parameters, the matrices in Definition \ref{defn:decomp} do not depend on the particular parameter by construction of the decomposition map. It is shown in \cite{Thiel-Dec,Thiel-Flat} that both types of generic parameters form a non-empty Zariski open subset of the parmater space and that block-generic parameters are decomposition-generic. Moreover, it follows from \cite{bellamy2018hyperplane} that the set of block-generic parameters is the complement of a finite hyperplane arrangement.

	What is important for us is that for the complex reflection groups $G(\ell,1,n)$ the two types of generic parameters coincide by \cite[Cor 3.22]{thiel2017restricted} and the hyperplane arrangement of non-generic parameter is explicitly known, see \cite{bellamy2018hyperplane}. We furthermore have the following important property of simple modules at generic parameters.

	\begin{prop}\label{prop:|W|}
		Let $W$ be of type $G(\ell,1,n)$ and $\rHbc$ the restricted rational Cherednik algebra of $W$ with parameter $\bc$. If $\bc$ is generic, we have
		\begin{equation}
			\dim(L(\lambda)) = |W|
		\end{equation}
		and $D_\Delta$ is a diagonal matrix with entries $\bar f_\lambda(t)$ for all $\lambda \in \Irr(W)$.
	\end{prop}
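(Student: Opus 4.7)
The strategy proceeds in two stages: first I would show that $\dim L(\lambda) = |W|$ for every $\lambda$, then deduce both the diagonality of $D_\Delta$ and the identification of its entries. The central input is geometric: for $W = G(\ell,1,n)$ the Calogero--Moser space, i.e.\ the spectrum of the center $Z(\Hbc)$, is smooth at generic $\bc$. This is classical (Etingof--Ginzburg, Gordon), and for $G(\ell,1,n)$ it is in fact one of the hallmark features that makes ``generic'' a meaningful notion in the sense of Section \ref{sec:generic}.

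Once smoothness is available, standard PI-theoretic arguments (the Azumaya locus criterion; cf.\ \cite[Sec.~3]{thiel2017restricted}) force every simple $\rHbc$-module to attain the PI-degree of $\Hbc$, which equals $|W|$. The same smoothness assertion is equivalent to the statement that the blocks of $\rHbc$ are singletons: each block contains a unique simple. In particular, $M(\lambda)$ and $L(\mu)$ lie in the same block only when $\mu = \lambda$, which yields $[M(\lambda):L(\mu)]^{\text{gr}} = 0$ for $\mu \neq \lambda$ and proves that $D_\Delta$ is diagonal.

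It remains to identify the diagonal entries $q_\lambda(t) := [M(\lambda):L(\lambda)]^{\text{gr}}$. Specializing at $t=1$ gives $q_\lambda(1) = \dim M(\lambda)/\dim L(\lambda) = \dim \lambda = \bar f_\lambda(1)$, and the constant term $q_\lambda(0) = 1 = \bar f_\lambda(0)$ is forced by the fact that both $M(\lambda)$ and $L(\lambda)$ have their unique copy of $\lambda$ in degree $0$, so exactly one unshifted composition factor $L(\lambda)$ of $M(\lambda)$ can originate in degree $0$. To upgrade these two scalar matches to a full polynomial identity, I would compare the two expressions for $[M(\lambda)]$ in the graded Grothendieck group of $W$: on one side $\sum_\mu f_\mu(t)\,[\mu\otimes\lambda]$ via \eqref{eq:Mchar}, and on the other $q_\lambda(t)\cdot[L(\lambda)]$ from diagonality, together with the graded $W$-character of $L(\lambda)$ forced by the singleton-block structure.

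The main obstacle is precisely this last identification of $[L(\lambda)]$ as a graded $W$-module, since \emph{ungraded} knowledge $L(\lambda)\cong \bbC W$ (which follows from $\dim L(\lambda) = |W|$ and \eqref{eq:regrep}) is not sharp enough to pin down the shifts. I would overcome this by transferring through the generic algebra $\overline{\mathsf{H}}$ over $\bbC(\mathbf{C})$ using the decomposition map of \cite{Thiel-Dec}, where the graded structure of the simple modules is computable, and then using the fact that $\bc$ is decomposition-generic to transport the equality $q_\lambda(t) = \bar f_\lambda(t)$ back to $\rHbc$; an alternative route goes through the graded Frobenius/Nakayama structure of $\rHbc$ to pin down the socle shift of $M(\lambda)$, from which the full polynomial $q_\lambda(t)$ is recovered.
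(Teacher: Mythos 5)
Your first two stages match the paper's argument exactly: the paper also starts from generic smoothness of the Calogero--Moser space of $G(\ell,1,n)$ (citing Etingof--Ginzburg 1.12--14), and then invokes Gordon's Prop.\ 5.2 and Lemma 5.5 from \cite{gordon2003baby}, which are precisely the Azumaya/PI-degree argument giving $\dim L(\lambda)=|W|$ and the singleton-block statement giving the diagonality of $D_\Delta$. So up to that point you have reconstructed the intended proof.

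The gap is in your identification of the diagonal entries. The evaluations $q_\lambda(1)=\dim\lambda$ and $q_\lambda(0)=1$ are correct but do not determine the polynomial, as you note; however, the two routes you offer to close the gap don't work as stated. The decomposition-map route is circular here: it merely transports the problem to the generic algebra $\overline{\mathsf{H}}$, where you would still need to compute $[\overline{M}(\lambda):\overline{L}(\lambda)]^{\text{gr}}$ by some other means. And the phrase ``the graded $W$-character of $L(\lambda)$ forced by the singleton-block structure'' overclaims: singleton blocks give the \emph{ungraded} isomorphism $L(\lambda)\cong\bbC W$ (via the Azumaya argument), not the grading. The proof the paper actually cites (Gordon's Thm.\ 5.6) finishes more directly: take graded multiplicities of the \emph{trivial} representation in the identity $[M(\lambda)]=q_\lambda(t)\,[L(\lambda)]$. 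From \eqref{eq:Mchar} one has $[M(\lambda):\mathrm{triv}]^{\text{gr}}=f_{\lambda^*}(t)$, while $L(\lambda)\cong\bbC W$ ungraded forces $[L(\lambda):\mathrm{triv}]^{\text{gr}}=t^{k_\lambda}$ for a single $k_\lambda$. Since $q_\lambda(0)=1$, this pins down $q_\lambda(t)=\bar f_{\lambda^*}(t)$. One then uses the fake-degree identity $\bar f_{\lambda^*}(t)=\bar f_{\lambda}(t)$ for $G(\ell,1,n)$ (which follows from Stembridge and Theorem~\ref{thm:fakedeg}, and is invoked later in the paper in the proof of Theorem~\ref{cor:Haiman}); this duality subtlety, invisible in type $A$ where all characters are self-dual, is not addressed in your proposal.
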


	\begin{proof}
		Since the Calogero-Moser space of $G(\ell,1,n)$ is generically smooth by \cite[\-1.12-14]{etingof2002symplectic}, we can use \cite[5.2+5.5]{gordon2003baby} and the proof of \cite[5.6]{gordon2003baby}.
	\end{proof}

	The above proposition allows us to use \eqref{eq:C=DC} to compute the matrix $C_L$.

	\subsection{Gordon's character formula}

		Gordon \cite[Thm. 6.4$(ii)$]{gordon2003baby} proved the following remarkable formula about the matrix $C_L$ in case of $W=G(1,1,n)=\frS_n$. Recall that there is a natural bijection between the irreducible characters of $W$ and partitions of $n$. Furthermore, note that $W$ has just a single conjugacy class of reflections, so the space of the parameters $\bc$ is one-dimensional and any non-zero parameter is generic.

		\begin{thm}[Gordon] \label{thm:AKostka}
		Let $W=\frS_n$ and let $\bc \neq 0$. Then the decomposition of the simple $\rHbc(W)$-module $L(\lambda)$ in the graded Grothendieck group of $W$ is given by
			\begin{equation}
				[L(\lambda)] = \sum_{\mu\in\scrP(n)} K_{\mu\lambda}(t,t) \cdot [\mu]\;.
			\end{equation}
		\end{thm}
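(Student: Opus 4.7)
The plan is to combine the structural information from Proposition \ref{prop:|W|} with a plethystic identity to directly compute $\ch L(\lambda)$. Since $W = \frS_n = G(1,1,n)$ has a single conjugacy class of reflections, every $\bc \neq 0$ is a generic parameter, so Proposition \ref{prop:|W|} applies: the matrix $D_\Delta$ is diagonal with diagonal entries $\bar f_\lambda(t)$, giving $[M(\lambda)] = \bar f_\lambda(t)\cdot [L(\lambda)]$ in $\calG_{\text{gr}}(W)$. Applying the Frobenius character map turns this into
\begin{equation*}
	\ch L(\lambda) = \bar f_\lambda(t)^{-1}\cdot \ch M(\lambda),
\end{equation*}
so it suffices to compute $\ch M(\lambda)$ and show it equals $\bar f_\lambda(t)\cdot G_\lambda(x;t,t)$.

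Next I would expand $\ch M(\lambda)$ via \eqref{eq:Mchar}. Since $\ch(\mu \otimes \lambda) = s_\mu * s_\lambda$, where $*$ denotes the internal (Kronecker) product on $\Lambda^n$ corresponding to pointwise multiplication of $\frS_n$-characters under $\ch$, this yields $\ch M(\lambda) = \sum_\mu f_\mu(t)\cdot (s_\mu * s_\lambda)$. The classical $t$-analogue of the hook length formula gives $f_\mu(t) = t^{b(\mu)}\prod_{i=1}^n(1-t^i)/H_\mu(t)$, and using \eqref{eq:shook} this can be rewritten as $f_\mu(t) = \prod_i(1-t^i)\cdot s_\mu[1/(1-t)]$. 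The Cauchy identity $\sum_\mu s_\mu(X)s_\mu(Y) = h_n[XY]$ specialized at $Y = 1/(1-t)$ then collapses the generating series:
\begin{equation*}
	\sum_\mu f_\mu(t)\cdot s_\mu \;=\; \prod_i(1-t^i)\cdot h_n\!\left[\tfrac{X}{1-t}\right],
\end{equation*}
so that $\ch M(\lambda) = \prod_i(1-t^i)\cdot\bigl(h_n[X/(1-t)] * s_\lambda\bigr)$.

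The crux of the proof is a Littlewood-type identity: $h_n[XY] * s_\lambda(X) = s_\lambda[XY]$ for any auxiliary alphabet $Y$ (treated as scalar with respect to the internal product in $X$). I would verify it by expanding both sides in the power-sum basis. Using $p_r[XY] = p_r(X)p_r(Y)$, the right-hand side equals $\sum_\nu z_\nu^{-1}\chi^\lambda_\nu\, p_\nu(X)\, p_\nu(Y)$; the left-hand side, after expanding $h_n[XY]$ via Cauchy and pulling $*$ through, becomes $\sum_\mu s_\mu(Y)\,(s_\mu * s_\lambda)(X)$. Expanding each $s_\mu$ and $s_\mu * s_\lambda$ in power sums and applying column orthogonality $\sum_\mu \chi^\mu_\sigma \chi^\mu_\nu = z_\sigma\delta_{\sigma\nu}$ matches the two expressions. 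Specializing $Y = 1/(1-t)$ and dividing by $\bar f_\lambda(t) = \prod_i(1-t^i)/H_\lambda(t)$, I conclude
\begin{equation*}
	\ch L(\lambda) = H_\lambda(t)\cdot s_\lambda\!\left[\tfrac{X}{1-t}\right] = G_\lambda(x;t,t) = \sum_\mu K_{\mu\lambda}(t,t)\cdot s_\mu
\end{equation*}
by \eqref{eq:G} and \eqref{eq:kostkamac} at $q=t$, which translates back to the asserted decomposition in the graded Grothendieck group of $W$. The main obstacle is the plethystic identity $h_n[XY]*s_\lambda(X) = s_\lambda[XY]$; everything else is assembly of classical facts and of the structural results from Section \ref{sec:rrca}.
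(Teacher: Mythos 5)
Your argument is correct, and it closely tracks the paper's (i.e.\ Gordon's) strategy while repackaging the central computation. The skeleton coincides with what the paper does for general wreath products in the proof of Theorem \ref{thm:ttKostka}: genericity of every nonzero $\bc$ for $\frS_n$ plus Proposition \ref{prop:|W|} give $[M(\lambda)] = \bar f_\lambda(t)\,[L(\lambda)]$, and then one feeds in Stembridge's fake-degree formula (Theorem \ref{thm:fakedeg}) and the identity $G_\lambda(x;t,t) = H_\lambda(t)\,s_\lambda[X/(1-t)] = \sum_\mu K_{\mu\lambda}(t,t)\,s_\mu$ from \eqref{eq:G} and \eqref{eq:kostkamac}. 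Where you diverge is the middle step. The paper, following Gordon, verifies the reduced character identity $\sum_\mu f_\mu(t)\,\chi^\mu_\rho\chi^\lambda_\rho = \bar f_\lambda(t)\sum_\mu K_{\mu\lambda}(t,t)\,\chi^\mu_\rho$ pointwise at each conjugacy class $\rho$, using the explicit formula of Proposition \ref{prop:12}(i) (at $\ell=1$ this is Macdonald's \mbox{[VI.8.16]}) together with $K_{\mu\,(n)}(t,t)=f_\mu(t)$ from Proposition \ref{prop:12}(ii). You instead stay inside $\Lambda$: rewrite $\ch M(\lambda) = \sum_\mu f_\mu(t)(s_\mu * s_\lambda)$ as $\prod_i(1-t^i)\cdot\bigl(h_n[X/(1-t)] * s_\lambda(X)\bigr)$ via Cauchy, and then collapse it in one step with the Littlewood--Kronecker identity $h_n[XY]*s_\lambda(X) = s_\lambda[XY]$, which you correctly prove by expanding in power sums and invoking column orthogonality. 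Under the Frobenius map the two routes are the same computation (your orthogonality step is exactly the per-conjugacy-class check), but yours is a cleaner one-shot argument that dispenses with the separate lemma $K_{\mu\,(n)}(t,t)=f_\mu(t)$. No gaps.
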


		In the language of decomposition matrices from Definition \ref{defn:decomp}, we can rewrite Theorem \ref{thm:AKostka} as
		\begin{equation}
				C_L = {(K_{\mu\lambda}(t,t))}_{\mu, \lambda \in \Irr(\frS_n)}
		\end{equation}
		with the Kostka--Macdonald coefficients as defined in \eqref{eq:kostkamac}. Using $G_\lambda(x;t,t)$ of \eqref{eq:G} we can also say
		\begin{equation} \label{gordon:s_n_2}
			\ch L(\lambda) =  G_\lambda(x;t,t)
		\end{equation}
		where $\ch L(\lambda)$ denotes image of the $W$-character of $L(\lambda)$ under the Frobenius character map \eqref{eq:Frob}.

	\section{Generalization to wreath products}

	We now come to the main part of this paper. We are going to generalize Gordon's character formula in Theorem \ref{thm:AKostka} and the implied relation \eqref{gordon:s_n_2} to restricted rational Cherednik algebras at generic parameters for wreath products $C_\ell \wr \frS_n$. Our strategy is to first generalize $t,t$-Kostka--Macdonald coefficients and the function $G_\lambda(x;t,t)$ to a multipartition setting and then use the techniques in \cite{gordon2003baby} to generalize Theorem \ref{thm:AKostka}. From there, we will derive our main result, Theorem \ref{cor:Haiman}, which relates the character formula under the Frobenius character map to Haimain's wreath Macdonald polynomials.

	\subsection{Multisymmetric functions} \label{sec:multi}
	Most of what we mention in this section can be gleaned from the more general \cite[App. B]{macdonald2015symmetric} which we cut to our needs.

	Fix some positive integers $n$, $\ell$, and let $W$ be the complex reflection group of type $G(\ell,1,n)$. An \textit{$\ell$-multipartition of n} is an $\ell$-tuple
		\begin{equation}
			\blambda = (\lambda^{(0)}, ..., \lambda^{(\ell-1)})
		\end{equation}
		of (possibly empty) partitions such that the sum $|{\blambda}|$ of their sizes  is $n$. We denote the set of all $\ell$-multipartition of $n$ by $\scrP(\ell,n)$ and write $\boldsymbol{\emptyset} \in \scrP(\ell,0)$ for the empty $\ell$-multipartition. The Young diagram of $\blambda$ is given by the $\ell$-tuple of the Young diagrams of the $\lambda^{(i)}$. As in the $\frS_n$ case, we have that the conjugacy classes of $C_\ell \wr \frS_n$ are parameterized by the set of $\ell$-multipartitions of $n$. Furthermore, we have a natural bijection
		\begin{equation}
			\Irr(C_\ell \wr \frS_n) \leftrightarrow \scrP(\ell,n)\;.
		\end{equation}
		We refer the reader to \cite{specht1933eine} and \cite{stembridge1989eigenvalues} for the construction.\\

	Fix some primitive $\ell\thh$ root of unity $\zeta_\ell$ and let
	\begin{equation}
		\{x_i^{(j)} \mid 1 \leq i \leq N, \  0\leq j \leq \ell-1\}
	\end{equation}
	be a set of indeterminates over $\bbC$ and let $\Lambda_N^{(j)}$ denote the ring of symmetric polynomials in the variables $x_i^{(j)}$ for $0\leq j \leq \ell-1$. We define the \textit{ring $\boldsymbol{\Lambda}_N$ of $\ell$-mul\-ti\-sym\-met\-ric polynomials in $N$ variables} by
	\begin{equation}
		\boldsymbol{\Lambda}_N = \bigotimes_{j=0}^{\ell-1} \Lambda_N^{(j)}
	\end{equation}
	For a nonnegative integer $k$ we denote by $\boldsymbol{\Lambda}_N^k$ the homogeneous multisymmetric polynomials of degree $k$. Define the \textit{ring of $\ell$-multisymmetric functions} as
		\begin{equation}
			\boldsymbol{\Lambda} = \Lambda^{\otimes \ell}\;.
		\end{equation}
		Its grading is defined via the sum of the degrees of the elementary tensors, i.e.
		\begin{equation}
			 \deg\left(\bigotimes_{i=0}^\ell p_{r_i}\right) = \sum_{i=0}^{\ell-1} r_i\;.
		\end{equation}
	We extend our convention and write for $0 \leq j \leq \ell-1$
	\begin{equation}
		X^{(j)} := x_1^{(j)} + ... + x_N^{(j)}\;.
	\end{equation}
	For a fixed $N$ there exists an evaluation homomorphism
		\begin{equation}
		\begin{array}{rcl}
 			\textnormal{\textbf{ev}}_N : \boldsymbol{\Lambda} & \to & \boldsymbol{\Lambda}_N \\
 			\bigotimes_{j=0}^{\ell-1} p_{r_j} & \mapsto & \ple \bigotimes_{j=0}^{\ell-1} p_{r_j} \ple \left[ \sum_{i=0}^{\ell-1} \zeta^{i\cdot j} X^{(i)} \right]
 		\end{array}
		\end{equation}
 		that restricts to a vector space isomorphism $\boldsymbol{\Lambda}^k \overset{\sim}{\longrightarrow} \boldsymbol{\Lambda}_N^k$ for all $k \leq N$. The proof is the same as in the symmetric group case of \cite[2.1]{loehr2011computational}.

	As was the case in type $A$, we also want to extend our scalars from $\bbC$ to $\bbC(q,t)$ for some complex indeterminates $q,t$. We define
		\begin{equation}
			\boldsymbol{\Lambda}(q,t) := \boldsymbol{\Lambda} \otimes_\bbC \bbC(q,t)\;, \ \
		\boldsymbol{\Lambda}_N(q,t) := \boldsymbol{\Lambda}_N \otimes_\bbC \bbC(q,t)\;.
		\end{equation}

	On our road to the multipartition version of the $t,t$-Kostka--Macdonald coefficients, we again start with the Schur functions. For $\blambda = (\lambda^{(0)}, ..., \lambda^{(\ell-1)}) \in \scrP(\ell,n)$ we define the \textit{$\ell$-multi-Schur function of $\blambda$} as
		\begin{equation}
			s_{\blambda} = \bigotimes_{j=0}^{\ell-1} s_{\lambda^{(j)}} \in \boldsymbol{\Lambda}
		\end{equation}
		and we define a scalar product $\left<\cdot ,\cdot \right>$ on $\boldsymbol{\Lambda}$ by again demanding that the Schur functions form an orthonormal basis. Using the type $A$ Frobenius character map and $\textnormal{\textbf{ev}}_N$, one can show
		\begin{equation}
			\textnormal{\textbf{ev}}_N (s_{\blambda}) = \prod_{j=0}^{\ell-1} s_{\lambda^{(j)}}\ple \left[X^{(j)}\right]
		\end{equation}
		(see \cite[Cor. 3]{poirier1998cycle} for a proof). Recall that $\CF(G)$ denotes the set of complex-valued class functions of a group $G$. We only give the polynomial version of the Frobenius character map here, the symmetric function version is obtained by expanding the plethysm and using $\mathbf{ev}_N$. For $\bmu \in \scrP(\ell,n)$, define $z_{\bmu}$ as the size of the stabilizer of the conjugacy class corresponding to $\bmu$ inside $C_\ell \wr \frS_n$Remar.

	\begin{defn}[\cite{poirier1998cycle}]\label{defn:poi}
		For a class function $\chi \in \CF(C_\ell \wr \frS_n)$ we again write $\chi_{\bmu}$ for the evaluation of $\chi$ at the class of cycle type $\bmu$. We define the \textit{Frobenius character map} of $C_\ell \wr \frS_n$ as
		\begin{equation}
			\begin{array}{rcl}
				\ch : \CF(C_\ell \wr \frS_n) &\to &\boldsymbol{\Lambda}_N \\
				\chi & \mapsto & \sum_{\bmu \in \scrP(\ell,n)} z_{\bmu}\inv \cdot \chi_{\bmu} \cdot \prod_{j=0}^{\ell-1}  p_{\mu^{(j)}}\hspace{-0.3em} \left[ \sum_{i=0}^{\ell-1} \overline{\zeta^{i\cdot j}} X^{(i)} \right]\;.
			\end{array}
		\end{equation}
	\end{defn}

	\begin{thm}[{\cite[Thm. 2]{poirier1998cycle}}]
		The Frobenius character map is an isometry of $\bbC$-vector spaces with the scalar product on $\CF(\frS_n)$ being given by the inner product of characters.
	\end{thm}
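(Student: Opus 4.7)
The plan is to show that $\ch$ sends the irreducible characters of $C_\ell \wr \frS_n$, which form an orthonormal basis of $\CF(C_\ell \wr \frS_n)$ under the standard inner product of characters, to the multi-Schur functions $s_{\blambda}$, which form an orthonormal basis of $\boldsymbol{\Lambda}_N$ by the very definition of the scalar product on the right-hand side. Once this bijection of orthonormal bases is established, the map $\ch$ is automatically a $\bbC$-linear isometry.

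To execute Step 1 I would recall the construction of the irreducible characters of a wreath product $G \wr \frS_n$ (here $G = C_\ell$) from \cite{specht1933eine,stembridge1989eigenvalues} or \cite[App.~B]{macdonald2015symmetric}. For $G = C_\ell$ the irreducibles are the one-dimensional characters $\chi_j$ determined by $\chi_j(c) = \zeta_\ell^j$, and the irreducible characters of $C_\ell \wr \frS_n$ are indexed by $\ell$-multipartitions $\blambda = (\lambda^{(0)}, \dots, \lambda^{(\ell-1)})$ of $n$. Evaluating $\chi^{\blambda}$ on a class of cycle type $\bmu = (\mu^{(0)}, \dots, \mu^{(\ell-1)})$ yields the standard wreath character formula expressing $\chi^{\blambda}_{\bmu}$ as a product of symmetric group character values twisted by roots of unity.

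The main computational step is Step 2: plugging this formula into Definition~\ref{defn:poi} and matching the output with $\textnormal{\textbf{ev}}_N(s_{\blambda}) = \prod_{j=0}^{\ell-1} s_{\lambda^{(j)}}[X^{(j)}]$. The key identity to use is the Fourier inversion on $C_\ell$: for each index $i$, the substitution
\begin{equation*}
    p_{r}\!\left[\sum_{i=0}^{\ell-1} \overline{\zeta^{ij}}\, X^{(i)}\right] = \sum_{i=0}^{\ell-1} \overline{\zeta^{ij}}\cdot p_r(x_1^{(i)},\dots,x_N^{(i)})
\end{equation*}
together with the orthogonality relation $\frac{1}{\ell}\sum_{j=0}^{\ell-1}\zeta^{ij}\overline{\zeta^{kj}} = \delta_{i,k}$ allows one to separate variables along the $\ell$ components and reduce to $\ell$ copies of the classical type-$A$ Frobenius formula $s_\lambda = \sum_\mu z_\mu^{-1}\chi_\mu^\lambda p_\mu$. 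The stabilizer size $z_{\bmu}$ then factorizes appropriately, matching $\prod_j z_{\mu^{(j)}}$ times the contribution of the $C_\ell$-factors, which is precisely what the root-of-unity orthogonality accounts for.

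Step 3 is then immediate: having identified $\ch(\chi^{\blambda}) = \textnormal{\textbf{ev}}_N(s_{\blambda})$, we conclude that $\ch$ maps the orthonormal basis $\{\chi^{\blambda}\}_{\blambda}$ to the orthonormal basis $\{s_{\blambda}\}_{\blambda}$, whence it is an isometry. The principal obstacle I anticipate is not conceptual but bookkeeping: keeping track of the $\ell$ sets of variables, the conjugate roots of unity appearing in the Frobenius formula versus the unconjugated ones in $\textnormal{\textbf{ev}}_N$, and the factorization of $z_{\bmu}$ (which equals $\prod_j \ell^{l(\mu^{(j)})} z_{\mu^{(j)}}$ up to the appropriate normalization coming from the cyclic-group part). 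The root-of-unity orthogonality precisely cancels the extra $\ell^{l(\mu^{(j)})}$ factors, delivering the clean product of type-$A$ Frobenius transforms. This is exactly the content of \cite[Cor.~3 \& Thm.~2]{poirier1998cycle}, which could alternatively be cited directly.
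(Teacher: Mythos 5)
The paper states this result as a citation to Poirier's Theorem~2 and does not include a proof, so there is no paper-internal argument to compare against. Your sketch is nonetheless correct and is essentially the argument one finds in Poirier: one shows $\ch(\chi^{\blambda}) = \textnormal{\textbf{ev}}_N(s_{\blambda})$ (Poirier's Corollary~3, which the paper also cites for the Schur-function identity just above Definition~\ref{defn:poi}), and then the isometry property follows because $\{\chi^{\blambda}\}$ is orthonormal for the character pairing while $\{s_{\blambda}\}$ is orthonormal by the definition of $\langle\cdot,\cdot\rangle$ on $\boldsymbol{\Lambda}$ transported through $\textnormal{\textbf{ev}}_N$. Your accounting of the bookkeeping is accurate: the centralizer size in $C_\ell\wr\frS_n$ indeed factors as $z_{\bmu}=\prod_j \ell^{l(\mu^{(j)})} z_{\mu^{(j)}}$, and the orthogonality relation $\frac{1}{\ell}\sum_j\zeta^{ij}\overline{\zeta^{kj}}=\delta_{ik}$ is precisely what absorbs the $\ell^{l(\mu^{(j)})}$ factors and separates the variables into $\ell$ copies of the type~$A$ Frobenius identity $s_\lambda=\sum_\mu z_\mu^{-1}\chi_\mu^\lambda p_\mu$. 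The only point worth flagging is that the scalar product on $\boldsymbol{\Lambda}_N$ is the one transported from $\boldsymbol{\Lambda}$ via the isomorphism $\textnormal{\textbf{ev}}_N$ in each degree $\le N$, which you use implicitly but should state when writing this up.
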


	As is the case with $\frS_n$, we identify a $C_\ell \wr \frS_n$-module with its character when applying $\ch$.

	The Littlewood--Richardson coefficients $c_{\mu\lambda}^\nu$ of type $A$ also generalize to the multipartition setting. Because for $c_{\mu\lambda}^\nu$ to be nonzero, we have to have $|\lambda| + |\mu| = |\nu|$, we can think of $(\mu,\lambda)$ as a bipartition of $|\nu|$ and generalize the Littlewood-Richardson coefficients to products of Schur polynomials with $\ell$ factors.
	\begin{defn}
		For an $\ell$-multipartitions $\brho \in \scrP(\ell,n)$ we write
		\begin{equation}
			\prod_{i=0}^{\ell-1} s_{\rho^{(i)}} = \sum_{\nu \in \scrP(n)} c_{\brho}^\nu s_{\nu} \in \Lambda\; .
		\end{equation}
	\end{defn}

	There are two very important properties of the Schur functions when it comes to plethystic substitutions. The first relates to sums of variables.

	\begin{prop}[{\cite[3.2]{loehr2011computational}}] \label{prop:sX+Y}
		For a partition $\lambda \in \scrP(n)$ and $0 \leq i \neq j \leq \ell-1$  we have
		\begin{equation}
			s_\lambda\left [X^{(i)} + X^{(j)} \right] = \sum_{(\rho^{(0)},\rho^{(1)}) \in \scrP(2,n)} c_{\rho^{(0)}\rho^{(1)}}^\lambda \cdot s_{\rho^{(0)}}\left [X^{(i)} \right] \cdot s_{\rho^{(1)}}\left [X^{(j)} \right]\;.
		\end{equation}
	\end{prop}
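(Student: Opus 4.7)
The plan is to reduce the claim to the classical ``coproduct formula'' for Schur functions and then invoke the definition of the Littlewood--Richardson coefficients together with the duality between multiplication and comultiplication on $\Lambda$.

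First I would unravel what $s_\lambda[X^{(i)} + X^{(j)}]$ actually means. Since $p_r[X^{(i)} + X^{(j)}] = p_r[X^{(i)}] + p_r[X^{(j)}]$ by the $\bbC$-algebra homomorphism property of $R_g$ applied to sums (one expands using $p_r[A+B] = p_r[A] + p_r[B]$, which follows from $p_1[A] = A$ and the homomorphism axioms), and because plethysm is an algebra homomorphism in the outer slot, $s_\lambda[X^{(i)} + X^{(j)}]$ equals $s_\lambda$ evaluated on the disjoint union of the two variable sets $\{x_k^{(i)}\} \sqcup \{x_k^{(j)}\}$. In other words, I reduce the identity to the well-known statement
\begin{equation}
s_\lambda(x,y) = \sum_{\mu,\nu} c_{\mu\nu}^\lambda \cdot s_\mu(x) \cdot s_\nu(y)\;,
\end{equation}
where $x$ and $y$ are two disjoint alphabets.

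Next I would prove that identity. The cleanest route is via the Hall scalar product on $\Lambda$: by definition of the Littlewood--Richardson coefficients we have $\langle s_\mu \cdot s_\nu, s_\lambda \rangle = c_{\mu\nu}^\lambda$, so the comultiplication dual to multiplication sends $s_\lambda \mapsto \sum_{\mu,\nu} c_{\mu\nu}^\lambda \, s_\mu \otimes s_\nu$. On the other hand, evaluating a symmetric function on a union of two disjoint alphabets is precisely the comultiplication realized on the level of functions, since for power sums one trivially has $p_r(x,y) = p_r(x) + p_r(y)$, which matches $\Delta(p_r) = p_r \otimes 1 + 1 \otimes p_r$ under the identification $f \otimes g \leftrightarrow f(x) g(y)$. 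Comparing the two expressions gives the claimed expansion.

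Alternatively, one could give a purely combinatorial proof: a semistandard Young tableau of shape $\lambda$ filled with entries from $\{x_k\} \cup \{y_k\}$ (ordered so that all $x$'s precede all $y$'s) splits uniquely into an $x$-subtableau of some shape $\mu$ and a skew $y$-subtableau of shape $\lambda/\mu$; rectifying the latter and invoking the Littlewood--Richardson rule packages the count into $c_{\mu\nu}^\lambda$. The main obstacle is simply verifying that the plethystic formalism of Section 1 genuinely coincides with ``disjoint union of alphabets''; once that translation is made, the identity is classical and either of the two arguments above closes it out.
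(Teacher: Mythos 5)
The paper does not actually prove this proposition; it simply cites Loehr--Remmel \cite[3.2]{loehr2011computational} and uses the result as a black box. Your argument is a correct and self-contained derivation, and both routes you sketch (the Hopf-algebra duality $\langle s_\mu s_\nu, s_\lambda\rangle = c_{\mu\nu}^\lambda$ with $\Delta(p_r)=p_r\otimes 1 + 1\otimes p_r$, and the combinatorial tableau-splitting argument) are standard ways to obtain the Schur-function coproduct identity. Your key preliminary observation---that $f[X^{(i)}+X^{(j)}]$ for $f\in\Lambda$ coincides with evaluating $f$ on the disjoint union of the two alphabets---is exactly the right reduction, and it does follow from the plethysm axioms as you indicate. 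One small slip in the write-up: the additivity $p_r[A+B]=p_r[A]+p_r[B]$ follows from the homomorphism property of the map $L_r\colon g\mapsto p_r[g]$ (homomorphism in the \emph{inner} slot), not of $R_g\colon f\mapsto f[g]$; the latter is then what you use afterwards to pass from the $p_r$'s to $s_\lambda$. With that attribution corrected, the proof is sound.
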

	\noindent We can expand Proposition \ref{prop:sX+Y} iteratively to arbitrary sums of variables.

	\begin{cor} \label{cor:sX+Y}
		For a partition $\lambda \in \scrP(n)$  we have
		\begin{equation}
			s_\lambda\left [\sum_{i=0}^{\ell-1} X^{(i)} \right] = \sum_{\brho \in \scrP(\ell,n)} c_{\brho}^\lambda \cdot \prod_{i=0}^{\ell-1} s_{\rho^{(i)}}\left [X^{(i)} \right] \;.
		\end{equation}
	\end{cor}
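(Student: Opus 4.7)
I would prove the statement by induction on $\ell$. The base case $\ell=2$ is precisely Proposition \ref{prop:sX+Y}: for $\brho=(\rho^{(0)},\rho^{(1)})\in\scrP(2,n)$ the definition gives $c_{\brho}^\lambda = c_{\rho^{(0)}\rho^{(1)}}^\lambda$, so the two statements coincide. The case $\ell=1$ is trivial, with the unique term $\brho=(\lambda)$ and coefficient $c_{(\lambda)}^\lambda=1$.

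For the inductive step, suppose the claim holds for $\ell-1$. Set $Z:=\sum_{i=1}^{\ell-1}X^{(i)}$. The identity of Proposition \ref{prop:sX+Y} is really a plethystic identity of the form $s_\lambda[A+B]=\sum_{\sigma,\tau}c_{\sigma\tau}^\lambda\, s_\sigma[A]\, s_\tau[B]$ that holds for any two alphabets $A,B$; this is the algebraic content of the proof in \cite[3.2]{loehr2011computational}. Applying it with $A=X^{(0)}$ and $B=Z$, and then applying the inductive hypothesis to expand each $s_\tau[Z]$, we obtain
\begin{equation*}
s_\lambda\!\left[\sum_{i=0}^{\ell-1}X^{(i)}\right]
=\sum_{\sigma,\tau}c_{\sigma\tau}^\lambda\, s_\sigma[X^{(0)}]\sum_{\bmu\in\scrP(\ell-1,|\tau|)}c_{\bmu}^{\tau}\prod_{i=1}^{\ell-1}s_{\mu^{(i-1)}}[X^{(i)}].
\end{equation*}
Reindexing via $\rho^{(0)}:=\sigma$ and $\rho^{(i)}:=\mu^{(i-1)}$ for $i\geq 1$ so that $\brho=(\rho^{(0)},\ldots,\rho^{(\ell-1)})\in\scrP(\ell,n)$, this becomes
\begin{equation*}
\sum_{\brho\in\scrP(\ell,n)}\left(\sum_{\tau\in\scrP(|\tau|)} c_{\rho^{(0)}\tau}^\lambda\cdot c_{(\rho^{(1)},\ldots,\rho^{(\ell-1)})}^{\tau}\right)\prod_{i=0}^{\ell-1}s_{\rho^{(i)}}[X^{(i)}].
\end{equation*}

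It remains to identify the bracketed sum with $c_{\brho}^\lambda$. This is an ``associativity'' identity for the generalized Littlewood--Richardson coefficients that follows at once from their definition together with the associativity of the product in $\Lambda$: expanding
\begin{equation*}
\prod_{i=0}^{\ell-1}s_{\rho^{(i)}}=s_{\rho^{(0)}}\cdot\prod_{i=1}^{\ell-1}s_{\rho^{(i)}}
=\sum_{\tau}c_{(\rho^{(1)},\ldots,\rho^{(\ell-1)})}^{\tau}\, s_{\rho^{(0)}}\cdot s_{\tau}
=\sum_{\lambda,\tau}c_{(\rho^{(1)},\ldots,\rho^{(\ell-1)})}^{\tau}\, c_{\rho^{(0)}\tau}^\lambda\, s_\lambda,
\end{equation*}
and comparing with $\prod_{i=0}^{\ell-1}s_{\rho^{(i)}}=\sum_\lambda c_{\brho}^\lambda s_\lambda$ gives exactly the required factorization. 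I do not expect a genuine obstacle here; the only mild subtlety is the passage from the two-single-alphabet form of Proposition \ref{prop:sX+Y} to its application with a composite alphabet $Z$ in the second slot, but this is automatic from the algebraic definition of plethysm (and can also be obtained by a second induction on the number of variables inside $Z$).
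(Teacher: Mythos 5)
Your proof is correct and takes essentially the same approach as the paper, which simply remarks that Corollary~\ref{cor:sX+Y} follows by iterating Proposition~\ref{prop:sX+Y}; you have supplied the induction and the associativity identity for the generalized Littlewood--Richardson coefficients that make this iteration rigorous, including the (correctly identified and resolved) point that Proposition~\ref{prop:sX+Y} must be read as the general two-alphabet plethystic identity from \cite[3.2]{loehr2011computational} in order to apply it with the composite alphabet $Z$.
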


	The second important property of the Schur functions has to do with the fake degree.

	\begin{thm}[{\cite[Thm 5.3]{stembridge1989eigenvalues}}] \label{thm:fakedeg}
		For a partition $\lambda \in \scrP(n)$ let $f_\lambda (t)$ denote its fake degree from Definition \ref{defn:fakedeg}. We then have
		\begin{equation}
			f_\lambda (t) = (1-t)\cdot (1-t^2) \cdots (1-t^n) \cdot t^{b(\lambda)} \cdot H_\lambda\inv(t)
		\end{equation}
		which gives us using \eqref{eq:shook}
		\begin{equation}
			f_\lambda (t) = (1-t)\cdot (1-t^2) \cdots (1-t^n) \cdot s_\lambda \ple \left[\frac{1}{1-t}\right]\;.
		\end{equation}
		For a multipartition $\blambda \in \scrP(\ell,n)$ with fake degree $f_{\blambda}(t)$ we have
		\begin{equation}
			f_{\blambda} (t) = t^{b(\alpha(\blambda))} \cdot (1-t^{\ell})\cdot (1-t^{2\cdot \ell}) \cdots (1-t^{n \cdot \ell}) \cdot \prod_{i=0}^{\ell-1} t^{\ell \cdot b(\lambda^{(i)})} \cdot H_\lambda\inv(t^\ell) \label{eq:fblambda}
		\end{equation}
		where $b(\alpha(\blambda))$ is the $b$-invariant of the vector $\alpha(\blambda)=(|\lambda^{(0)}|, ..., |\lambda^{(\ell-1)}|)$ given in \eqref{eq:binv}. Expressing \eqref{eq:fblambda} in Schur functions we obtain
		\begin{equation}
			f_{\blambda} (t) = t^{b(\alpha(\blambda))} \cdot (1-t^{\ell})\cdot (1-t^{2\cdot \ell}) \cdots (1-t^{n \cdot \ell}) \cdot \prod_{i=0}^{\ell-1} s_{\lambda^{(i)}} \ple \left[\frac{1}{1-t^\ell}\right]\;.
		\end{equation}
	\end{thm}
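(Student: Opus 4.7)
The plan is to compute $f_\blambda(t)$ by writing the graded Frobenius character of the coinvariant algebra $\bbC[\frh]_W$ as a multisymmetric function and reading off the coefficient of $s_\blambda$. Since $\bbC[\frh] \cong \bbC[\frh]^W \otimes \bbC[\frh]_W$ as graded $W$-modules and the Hilbert series of $\bbC[\frh]^W$ for $W = G(\ell,1,n)$ is $\prod_{i=1}^n (1-t^{i\ell})\inv$, the prefactor $\prod_{i=1}^n (1-t^{i\ell})$ appearing in the claimed formula matches directly; the substantive work is therefore the computation of $\ch\bbC[\frh](t)$ inside $\bLambda$ followed by the extraction of Schur coefficients.

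For the partition case $\ell = 1$, a Cauchy-type identity yields $\ch\bbC[\frh](t) = \sum_{\lambda \in \scrP(n)} s_\lambda \cdot s_\lambda \ple \left[\tfrac{1}{1-t}\right]$ under the Frobenius map, so the coefficient of $s_\lambda$ in $\ch\bbC[\frh]_{\frS_n}(t)$ is $\prod_{i=1}^n (1-t^i) \cdot s_\lambda \ple \left[\tfrac{1}{1-t}\right]$; identity \eqref{eq:shook} then converts this into $\prod_{i=1}^n (1-t^i) \cdot t^{b(\lambda)} H_\lambda\inv(t)$, which is the claimed partition formula. For the multipartition case the same strategy is carried out inside $\bLambda$: the Frobenius map of Definition \ref{defn:poi} already encodes the decomposition of $\frh$ into $C_\ell$-eigenspaces via the alphabets $X^{(0)},\ldots,X^{(\ell-1)}$, producing a wreath-Cauchy identity of the shape
\begin{equation*}
\ch\bbC[\frh](t) = \sum_{\blambda \in \scrP(\ell,n)} s_\blambda \cdot t^{b(\alpha(\blambda))} \prod_{j=0}^{\ell-1} s_{\lambda^{(j)}} \ple \left[\tfrac{1}{1-t^\ell}\right].
\end{equation*}
Dividing through by the invariant Hilbert series and applying \eqref{eq:shook} componentwise in the variable $t^\ell$ then produces both equivalent forms of $f_\blambda(t)$ stated in the theorem.

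The main obstacle is pinning down the twist $t^{b(\alpha(\blambda))}$. It arises because the $j$-th character of the diagonal $C_\ell$ is carried on the polynomial ring by a monomial of degree $j$, so under the principal specialization each box of $\lambda^{(j)}$ picks up a factor $t^j$ instead of $t^0$; summing over the components gives $\sum_{j=0}^{\ell-1} j \cdot |\lambda^{(j)}| = b(\alpha(\blambda))$. The delicate bookkeeping is tracking the interaction between the conjugate roots of unity appearing in Definition \ref{defn:poi} and the principal specialization of each alphabet $X^{(j)}$ separately; but once the twist is isolated, the remaining product over $j$ of hook polynomials in $t^\ell$ falls out mechanically from iterating \eqref{eq:shook}.
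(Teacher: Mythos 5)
The paper does not prove this theorem; it is imported wholesale from Stembridge \cite{stembridge1989eigenvalues}, so there is no internal argument to compare against. Your approach --- computing the graded Frobenius character of $\bbC[\frh]$ via a (wreath-)Cauchy expansion, dividing out the invariants Hilbert series $\prod_{i=1}^n(1-t^{i\ell})^{-1}$ (correct, since the fundamental degrees of $G(\ell,1,n)$ are $\ell, 2\ell, \dots, n\ell$), and reading off the coefficient of $s_{\blambda}$ --- is the standard proof of this formula and does yield the stated result once \eqref{eq:shook} is applied in the variable $t^\ell$. The one place that needs more care than you give it is the origin of the twist $t^{b(\alpha(\blambda))}$: a linear coordinate $y_i \in \frh^*$ transforms under the diagonal $C_\ell$ by the \emph{inverse} of the character on $\frh$, so the $j$-labelled component of $\blambda$ is a priori matched with monomials of degree $\equiv -j$, not $+j$, modulo $\ell$; this apparent sign is exactly cancelled by the complex conjugates $\overline{\zeta^{ij}}$ built into the Frobenius map of Definition \ref{defn:poi}. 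You flag this bookkeeping as delicate but do not actually carry it out, and since it is where an off-by-conjugation error would hide, you should pin the convention down on a one-box example (e.g.\ $C_3$ with $n=1$, where $f_{\chi_1}(t)=t^2$ and $f_{\chi_2}(t)=t$) to confirm that the wreath-Cauchy identity holds as you have written it. With that check in place the sketch is a complete and correct proof of the cited formula.
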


	\begin{defn}
		We define for $\blambda \in \scrP(\ell,n)$ the number
		\begin{equation}
			b(\blambda) = b(\alpha(\blambda)) + \ell \cdot \sum_{i=0}^{\ell-1} b(\lambda^{(i)})\;.
		\end{equation}
	\end{defn}

	\begin{rem} \label{rem:barf}
		When we set $\ell=1$, the definition of $b(\blambda)$ agrees with $b(\lambda)$. Also,
		using Theorem \ref{thm:fakedeg} we can see that the lowest power appearing with nonzero coefficient in $f_{\blambda}$ is in degree $b(\blambda)$. This means for the grade-shifted version of the fake degree $\bar{f}_{\blambda}(t)$ from Definition \ref{defn:fakedeg} we have
		\begin{equation}
			\bar{f}_{\blambda}(t) = t^{-b(\blambda)} \cdot f_{\blambda}(t)\;.
		\end{equation}
	\end{rem}

	In \cite{haiman2003combinatorics} Haiman defined wreath Macdonald polynomials
	\begin{equation}
		H_{\blambda}(x;q,t)
	\end{equation}
	for $C_\ell \wr \frS_n$ which were proven to exist in \cite{bezrukavnikov2014wreath}. We will not review their definition here as that would require a lot more combinatorial overhead. For an in-depth discussion we refer to \cite{wen2019wreath}.

	\subsection{$t,t$-Kostka--Macdonald coefficients}

	We define multisymmetric functions $G_{\blambda}(x;t,t)$ which will give us the graded characters of simple modules of the restricted rational Cherednik algebra in the wreath product case. The $G_{\blambda}(x;t,t)$ will turn out to be a specialized versions of the wreath Macdonald polynomials defined by Haiman in \cite{haiman2003combinatorics} (see Theorem \ref{cor:Haiman}).

	Define for $0 \leq j \leq \ell-1$ the variables
		\begin{equation}
			Z^{(p)} = \sum_{i=0}^{\ell-1}  t^{\overline{(i-p)}} X^{(i)} \in \left<X^{(i)} \mid 0\leq i \leq \ell-1\right> _{\bbC(t)} \label{eq:Z}
		\end{equation}
		where the exponent of $t$ is the representative of the congruence class of $i-p$ in $\{0, ..., \ell-1\}$.

	\begin{defn}\label{defn:H}
		For $\blambda \in \scrP(\ell,n)$ we define $G_{\blambda} (x;t,t) \in \boldsymbol{\Lambda}(q,t)$ as the preimage of
		\begin{equation}
			 \prod_{j=0}^{\ell-1} G_{\lambda^{(j)}} (x;t^\ell,t^\ell) \left[Z^{(j)}\right] \label{eq:H}
		\end{equation}
		under $\textnormal{\textbf{ev}}_N$.
	\end{defn}

	\begin{defn} \label{defn:ttKostka}
		For $\blambda,\bmu\in\scrP(\ell,n)$ we define the \textit{$t,t$-Kostka--Macdonald coefficient} $K_{\bmu\blambda}(t,t)$ by
		\begin{equation}
			G_{\blambda} (x;t,t) = \sum_{\bmu \in \scrP(\ell,n)} K_{\bmu\blambda}(t,t) \cdot s_{\bmu}\;. \label{eq:ttKostka}
		\end{equation}
	\end{defn}

	\subsection{Generalization of Gordon's character formula}

	Let $W$ be the complex reflection group of type $G(\ell,1,n)$ and $\rHbc$ the restricted rational Cherednik algebra of $W$ in generic parameter. For a graded $\rHbc$-module $V$ we denote by $[V]$ its representative in the graded Grothendieck group of $W$.

	\begin{thm}\label{thm:ttKostka}
		For $\ell$-multipartitions $\blambda,\bmu \in \scrP(\ell,n)$ let $K_{\bmu\blambda}(t,t)$ be the $t,t$-Kostka--Macdonald coefficients as in Definition \ref{defn:ttKostka} and $[L(\blambda)]$ the simple $\rHbc$-module associated to $\lambda$. We then have that
		\begin{equation}
			[L(\blambda)] = \sum_{\bmu \in \scrP(\ell,n)} K_{\bmu\blambda}(t,t) \cdot [\bmu]\;.
		\end{equation}
	\end{thm}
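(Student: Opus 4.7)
My plan is to follow Gordon's strategy for Theorem \ref{thm:AKostka} with the multi-symmetric machinery developed in Section 3.1. The starting observation is that Proposition \ref{prop:|W|} tells us that at a generic parameter $D_\Delta$ is diagonal with entries $\bar f_{\blambda}(t)$, so combining \eqref{eq:Mchar} with \eqref{eq:C=DC} gives
\begin{equation*}
[L(\blambda)] \;=\; \bar f_{\blambda}^{-1}(t) \cdot [M(\blambda)] \;=\; \bar f_{\blambda}^{-1}(t) \sum_{\bmu \in \scrP(\ell,n)} f_{\bmu}(t) \cdot [\bmu \otimes \blambda]
\end{equation*}
in the graded Grothendieck group extended over $\bbQ(t)$. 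Since the Frobenius character map sends the irreducible $\bmu$ to $s_{\bmu}$, by Definition \ref{defn:ttKostka} the theorem is equivalent to the identity $\ch L(\blambda) = G_{\blambda}(x;t,t)$ in $\boldsymbol{\Lambda}(t)$.

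The next step is to transport this identity through $\textnormal{\textbf{ev}}_N$. Applying the wreath-product Frobenius formula (Definition \ref{defn:poi}) to each $\bmu \otimes \blambda$ and swapping the order of summation yields
\begin{equation*}
\textnormal{\textbf{ev}}_N(\ch M(\blambda)) \;=\; \sum_{\bnu} z_{\bnu}^{-1} \chi_{\blambda}(\bnu) \Bigl(\sum_{\bmu} f_{\bmu}(t) \chi_{\bmu}(\bnu)\Bigr) \prod_{j=0}^{\ell-1} p_{\nu^{(j)}}\!\left[\sum_{i=0}^{\ell-1}\overline{\zeta^{ij}} X^{(i)}\right].
\end{equation*}
The inner bracket is precisely the graded character of the coinvariant algebra $\bbC[\frh]_W$ evaluated at any element $w$ of cycle type $\bnu$, which Molien's theorem, together with the fact that the fundamental invariant degrees of $W = C_\ell \wr \frS_n$ are $\ell, 2\ell, \ldots, n\ell$, identifies as $\prod_i (1 - t^{i\ell})\,/\,\det_{\frh}(1-tw)$; a direct cycle computation shows $\det_{\frh}(1-tw) = \prod_{j=0}^{\ell-1} \prod_{k \in \nu^{(j)}} (1 - t^k \zeta^j)$.

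Using Theorem \ref{thm:fakedeg} together with the identity $b(\blambda) = b(\alpha(\blambda)) + \ell \sum_i b(\lambda^{(i)})$, one computes $\bar f_{\blambda}^{-1}(t) = \prod_j H_{\lambda^{(j)}}(t^\ell) / \prod_i (1 - t^{i\ell})$. Multiplying the two expressions cancels the $\prod_i (1-t^{i\ell})$, leaving a single sum over $\bnu$. On the $G_{\blambda}$ side, expanding each $s_{\lambda^{(j)}}$ in the power-sum basis and using $p_k[Z^{(j)}/(1-t^\ell)] = p_k[Z^{(j)}]/(1-t^{k\ell})$ together with $Z^{(j)} = \sum_i t^{\overline{i-j}} X^{(i)}$ gives a sum of the same shape. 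The calculation then reduces to a term-by-term match between the two power-sum expansions.

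The main obstacle is exactly this matching. On the character side one has substitutions by $\sum_i \overline{\zeta^{ij}} X^{(i)}$ and denominators $\prod_{j,k}(1-t^k\zeta^j)^{-1}$, whereas the $G_{\blambda}$ side has substitutions by $\sum_i t^{\overline{i-j}} X^{(i)}$ and denominators $(1-t^{k\ell})^{-1}$. Reconciling these requires a careful bookkeeping of the root-of-unity exponents; the crucial product identity $\prod_{j=0}^{\ell-1}(1-t^k\zeta^j) = 1-t^{k\ell}$ lets one distribute one factor $(1-t^{k\ell})^{-1}$ from the $G_{\blambda}$ side across the $\ell$ tensor factors on the character side, while the Fourier-like duality between $\chi_{\blambda}(\bnu)$ and the multi-Schur-function plethysm---expressed through Corollary \ref{cor:sX+Y}---relates the two $\zeta^{ij}$-substitutions. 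Once this power-sum identity is verified, \eqref{eq:ttKostka} concludes the proof.
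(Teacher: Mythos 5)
Your reduction $[L(\blambda)] = \bar{f}_{\blambda}^{-1}(t)\,[M(\blambda)]$ via Proposition \ref{prop:|W|} matches the paper's starting point, and your idea of invoking Molien's theorem to evaluate $\sum_{\bmu} f_{\bmu}(t)\chi_{\bmu}(\bnu)$ in closed form as $\prod_{i=1}^{n}(1-t^{i\ell})/\det_{\frh}(1-tw)$ is a genuine shortcut. The paper instead proves $K_{\bmu\mathbf{1}}(t,t)=f_{\bmu}(t)$ (Proposition \ref{prop:12}$(ii)$) and round-trips through the trivial character $\chi^{\mathbf{1}}_{\brho}$, which requires a separate power-sum computation via Corollary \ref{cor:sX+Y}; the Molien identity replaces all of that by a standard fact about coinvariant algebras of complex reflection groups, and your cycle-type computation of the determinant is correct.

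However, the ``main obstacle'' you flag --- expanding $G_{\blambda}(x;t,t)$ in power sums and matching it against the character side --- is exactly the technical heart of the argument, and you leave it unproved. What you need is the content of the paper's Proposition \ref{prop:12}$(i)$, whose proof rests on the change-of-variables identity $\overline{T}\cdot\mathbf{z} = D\cdot\overline{T}\cdot\mathbf{x}$ of Lemma \ref{lem:z}: applying the inverse discrete Fourier transform $\overline{T}=(\overline{\zeta^{ij}})$ to the $t$-twisted variables $Z^{(j)}=\sum_{i} t^{\overline{(i-j)}}X^{(i)}$ yields the Fourier-transformed $X$-variables scaled diagonally by $(1-t^{\ell})/(1-\zeta^{j}t)$. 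That lemma is what makes the plethysms $p_{\mu^{(j)}}[Z^{(j)}/(1-t^{\ell})]$ collapse to $\prod_i(1-\zeta^j t^{\mu^{(j)}_i})^{-1}\cdot p_{\mu^{(j)}}[\sum_i\overline{\zeta^{ij}}X^{(i)}]$, and proving it requires a geometric-series expansion together with a mod-$\ell$ case analysis that your sketch never carries out. Moreover, you point to Corollary \ref{cor:sX+Y} as the tool reconciling the two $\zeta^{ij}$-substitutions, but that corollary concerns distributing a Schur function over a plain sum of variable sets and contains no $t$-twist; it is irrelevant to the matching you need. Without Lemma \ref{lem:z}, the ``term-by-term match'' you promise cannot be completed, so the proposal as written has a genuine gap precisely where you acknowledge the difficulty lies.
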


	The proof will occupy the remainder of this subsection. To generalize the $\ell=1$ proof in \cite[Thm. 6.4$(ii)$]{gordon2003baby}, we have to show two properties of the wreath Kostka--Macdonald coefficients.

	We write $\mathbf{1}$ (resp. $1$) for the $\ell$-multipartition corresponding to the trivial representation, i.e. $\mathbf{1} = ((n), \emptyset, ..., \emptyset)$ (resp. $1 =(n)$). Let $\zeta$ be a primitive $\ell\thh$ root of unity.

	\begin{lem} \label{lem:z}
		Let $Z^{(p)}$ be the variables defined in \eqref{eq:Z}. We define two vectors
		\begin{equation}
			\mathbf{z} = \begin{pmatrix}
				Z^{(0)} & \cdots & Z^{(\ell-1)}
			\end{pmatrix}^{\tr}\;, \ \
			\mathbf{x} = \begin{pmatrix}
				X^{(0)} & \cdots & X^{(\ell-1)}
			\end{pmatrix}^{\tr}\;,
		\end{equation}
		where $(\cdot)^{\tr}$ denotes the transposition, and two matrices
		\begin{equation}
			T=(\zeta^{i\cdot j})_{0 \leq i,j \leq \ell-1}\;, \ \ D=\diag\left(\frac{1-t^\ell}{1-\zeta^0 t}\;, \ ... \ , \ \frac{1-t^\ell}{1-\zeta^{\ell-1}t}\right)\;.
		\end{equation}
		We then have
		\begin{equation}
			\overline{T} \cdot \mathbf{z} = D \cdot \overline{T} \cdot \mathbf{x} \label{eq:Tz=DTz}
		\end{equation}
		where $\overline{T}$ is obtained by complex conjugation of the entries of $T$.
	\end{lem}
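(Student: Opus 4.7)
The plan is to verify the matrix identity componentwise by reducing everything to an explicit computation with the coefficients of $X^{(i)}$ in the variables $Z^{(p)}$. Since both sides of \eqref{eq:Tz=DTz} are linear in the $X^{(i)}$, it suffices to show that for each $k \in \{0, \ldots, \ell-1\}$ the coefficient of $X^{(i)}$ in the $k$-th entry of $\overline{T}\mathbf{z}$ agrees with the coefficient of $X^{(i)}$ in the $k$-th entry of $D\overline{T}\mathbf{x}$. On the right, $(D\overline{T}\mathbf{x})_k$ simply equals $\frac{1-t^\ell}{1-\zeta^k t} \sum_{i=0}^{\ell-1} \zeta^{-ki} X^{(i)}$, so the coefficient of $X^{(i)}$ there is $\frac{1-t^\ell}{1-\zeta^k t}\cdot \zeta^{-ki}$.

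On the left, using the definition $Z^{(p)} = \sum_{i=0}^{\ell-1} t^{\overline{(i-p)}} X^{(i)}$ and expanding $(\overline{T}\mathbf{z})_k = \sum_{p=0}^{\ell-1} \zeta^{-kp} Z^{(p)}$, I would interchange the order of summation to read off the coefficient of $X^{(i)}$ as $\sum_{p=0}^{\ell-1} \zeta^{-kp} \, t^{\overline{(i-p)}}$. The key step is then a substitution $m := \overline{(i-p)} \in \{0,\ldots,\ell-1\}$, which corresponds to $p \equiv i-m \pmod{\ell}$ and hence $\zeta^{-kp} = \zeta^{-ki}\zeta^{km}$. This converts the sum into $\zeta^{-ki} \sum_{m=0}^{\ell-1} (\zeta^k t)^m$.

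The final step is to recognize the geometric series $\sum_{m=0}^{\ell-1}(\zeta^k t)^m = \frac{1-\zeta^{k\ell}t^\ell}{1-\zeta^k t}$ and use $\zeta^{k\ell} = 1$ to simplify the numerator to $1-t^\ell$. This yields precisely $\frac{1-t^\ell}{1-\zeta^k t}\cdot \zeta^{-ki}$, matching the coefficient from the right-hand side. Since $k$ and $i$ were arbitrary, the equality $\overline{T}\mathbf{z} = D\overline{T}\mathbf{x}$ follows.

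The only mildly delicate point is handling the index shift under the convention that $\overline{(i-p)}$ denotes the representative in $\{0,\ldots,\ell-1\}$: one needs to check that as $p$ ranges over $\{0,\ldots,\ell-1\}$, the value $m = \overline{(i-p)}$ also ranges bijectively over $\{0,\ldots,\ell-1\}$, so that the reindexed sum is a genuine geometric progression of length $\ell$. Everything else is formal manipulation.
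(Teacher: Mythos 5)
Your proof is correct, and it takes a genuinely different route from the paper's. The paper proves the identity by first left-multiplying both sides by $T$, using the row orthogonality $T\overline{T} = \ell\cdot I$ to isolate $\ell\cdot\mathbf{z}$, then expanding $\frac{1}{1-\zeta^j t}$ as an \emph{infinite} formal power series $\sum_{k\geq 0}(\zeta^j t)^k$, interchanging sums, and invoking orthogonality of roots of unity once more to pick out the surviving exponents; one then recognizes the remaining infinite geometric series in $t^\ell$ and resums it to recover the defining formula for $Z^{(p)}$. Your argument instead verifies the claimed identity directly and componentwise: you read off the coefficient of $X^{(i)}$ in each entry of $\overline{T}\mathbf{z}$, reindex a single \emph{finite} sum via $m=\overline{(i-p)}$ (noting correctly that $p\mapsto\overline{(i-p)}$ is a bijection on $\{0,\ldots,\ell-1\}$ for fixed $i$), and close with the finite geometric series $\sum_{m=0}^{\ell-1}(\zeta^k t)^m = \frac{1-t^\ell}{1-\zeta^k t}$. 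What this buys you is the avoidance of any passage through formal power series and the extra multiplication by $T$ and division by $\ell$; what the paper's route buys is that it shows how the definition of $Z^{(p)}$ can be \emph{derived}, not merely verified, from the desired diagonalization. Both are correct; yours is arguably the shorter and more elementary check.
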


	\begin{proof}
		We show this by solving the system of linear equations for $\mathbf{z}$. We first have to see that
			\begin{equation}
				T\cdot \overline{T} = \diag(\ell, ..., \ell)
			\end{equation}
		by row orthogonality of the character table of $C_\ell$ (which is equal to $T$). This means we have
		\begin{equation}
			T \cdot \overline{T} \cdot \mathbf{z} = \ell \cdot \mathbf{z}\;. \label{eq:ellz}
		\end{equation}
		On the right hand side of equation \eqref{eq:Tz=DTz}, we obtain after multiplication with $T$ for the $p\thh$ entry
			\begin{equation}
				(T\cdot D \cdot \overline{T} \cdot \mathbf{x} )_p = \sum_{j=0}^{\ell-1} \zeta^{p\cdot j} \left( \frac{1-t^\ell}{1-\zeta^j t} \sum_{i=0}^{\ell-1} \overline{\zeta^{i\cdot j}} X^{(i)} \right)\;. \label{eq:TDTxp}
			\end{equation}
		Combining \eqref{eq:ellz} and \eqref{eq:TDTxp}, we get for $Z^{(p)}$
		\begin{align*}
			Z^{(p)} &= \frac{1}{\ell} \cdot \sum_{j=0}^{\ell-1} \zeta^{p\cdot j} \left( \frac{1-t^\ell}{1-\zeta^j t} \sum_{i=0}^{\ell-1} \overline{\zeta^{i\cdot j}} X^{(i)} \right)\\
			&= \frac{1}{\ell} (1-t^\ell) \cdot \sum_{j=0}^{\ell-1} \zeta^{p\cdot j} \left( \frac{1}{1-\zeta^j t} \sum_{i=0}^{\ell-1} \overline{\zeta^{i\cdot j}} X^{(i)} \right)\\
			&= \frac{1}{\ell} (1-t^\ell) \cdot \sum_{i=0}^{\ell-1} \left ( \sum_{j=0}^{\ell-1} \frac{1}{1-\zeta^j t} \zeta^{p\cdot j - i\cdot j} \right ) X^{(i)}\\
			&= \frac{1}{\ell} (1-t^\ell) \cdot \sum_{i=0}^{\ell-1} \left ( \sum_{j=0}^{\ell-1} {\left(\zeta^{-(i-p)}\right)}^j \sum_{k=0}^{\infty} {(\zeta^j t)}^k \right) X^{(i)}\\
			&= \frac{1}{\ell} (1-t^\ell) \cdot \sum_{i=0}^{\ell-1} \left ( \sum_{k=0}^{\infty} t^k \sum_{j=0}^{\ell-1} {\left(\zeta^{k-(i-p)}\right)}^j \right) X^{(i)}\\
			\intertext{The inner sum over $j$ is equal to $\ell$ if $k-(i-p) \equiv 0$ mod $\ell$, and else $0$. Therefore, only every $\ell\thh$ $k$-summand remains. Note that $i-p$ is in $\{-(\ell-1), ..., \ell-1\}$. If $i-p$ is nonnegative, because $k\geq 0$, all $k$ of the form $k=\ell\cdot k' + (i-p)$ for $k' \geq 0$ remain. If $i-p$ is negative, all $k$ of the form $k = \ell\cdot k' + \ell + (i-p)$ for $k'\geq 0$ remain. That is equivalent to saying all $k$ of the form $k=\ell\cdot k' + \overline{(i-p)}$ for some $k'\geq 0$ remain where $\overline{(i-p)}$ denotes the unique element in $\{0, ..., \ell-1\}$ that is equivalent to $i-p$ mod $\ell$.}
			&= \frac{1}{\ell} (1-t^\ell) \cdot \sum_{i=0}^{\ell-1} \left ( \sum_{k'=0}^{\infty} \ell \cdot t^{\ell k' + \overline{(i-p)}} \right) X^{(i)} \\
			&= \frac{1}{\ell} (1-t^\ell) \cdot \sum_{i=0}^{\ell-1} \frac{\ell\cdot t^{\overline{(i-p)}}}{1-t^\ell} X^{(i)} \\
			&= \frac{1}{\ell} \cdot \sum_{i=0}^{\ell-1} \ell \cdot  t^{\overline{(i-p)}} \cdot  X^{(i)} \\
			&= \sum_{i=0}^{\ell-1}  t^{\overline{(i-p)}}  X^{(i)}
		\end{align*}
		That means that $Z^{(p)}$ is a cyclic permutation of $Z^{(0)} = \sum_{i=0}^{\ell-1} t^i X^{(i)}$ which agrees with their definition in \eqref{eq:Z}
	\end{proof}

	The first statement in the following proposition is an $\ell$-analogue of the traditional case (see the proof of \cite[IV.8.16]{macdonald2015symmetric})

	\begin{prop}\label{prop:12}
		Let $\brho=(\rho^{(0)}, ..., \rho^{(\ell-1)})\in \scrP(\ell,n)$ and recall that $H_\lambda(t)$ denotes the hook polynomial of a partition $\lambda$ defined in \eqref{eq:hook}. We then have the following properties of the $t,t$-Kostka--Macdonald coefficients.
		\begin{enumerate}[label=(\roman*)]
			\item $\sum_{\bmu}  K_{\bmu\blambda}(t,t) \ \chi^{\bmu}_{\brho} = \prod_{j=0}^{\ell-1} \left ( H_{\lambda^{(j)}}(t^\ell) \prod_i \left(1-\zeta^j t^{\rho^{(j)}_i}\right)^{-1} \right ) \chi^{\blambda}_{\brho}$\;,\\
			\item  $ K_{\bmu\mathbf{1}}(t,t) = f_{\bmu}(t)$\;.
		\end{enumerate}
	\end{prop}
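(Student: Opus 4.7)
The plan is to interpret both sides of (i) as class functions of $\brho$ and compare their Frobenius images under the map from Definition \ref{defn:poi}. By Definitions \ref{defn:ttKostka} and \ref{defn:poi}, the Frobenius image of the LHS class function $w \mapsto \sum_{\bmu} K_{\bmu\blambda}(t,t) \chi^{\bmu}(w)$ is exactly $\textnormal{\textbf{ev}}_N\bigl(G_{\blambda}(x;t,t)\bigr)$, while the Frobenius image of the class function on the RHS is
\[
\sum_{\brho} z_{\brho}^{-1} \prod_{j=0}^{\ell-1} H_{\lambda^{(j)}}(t^\ell) \prod_i (1-\zeta^j t^{\rho^{(j)}_i})^{-1} \cdot \chi^{\blambda}_{\brho} \cdot \prod_{j=0}^{\ell-1} p_{\rho^{(j)}}\ple\left[\sum_{i=0}^{\ell-1} \overline{\zeta^{ij}} X^{(i)}\right].
\]
Since the Frobenius character map is injective on $\CF(C_\ell \wr \frS_n)$, it suffices to prove this symmetric-function identity in $\boldsymbol{\Lambda}_N(q,t)$.

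The route to do this is to directly expand $\textnormal{\textbf{ev}}_N(G_{\blambda}(X;t,t))$ in the same "Poirier power sums" $\prod_j p_{\rho^{(j)}}\ple[\sum_i \overline{\zeta^{ij}} X^{(i)}]$. Starting from Definition \ref{defn:H} combined with \eqref{eq:G}, I write $\textnormal{\textbf{ev}}_N(G_{\blambda}(X;t,t)) = \prod_p H_{\lambda^{(p)}}(t^\ell)\, s_{\lambda^{(p)}}\ple[Z^{(p)}/(1-t^\ell)]$, expand each Schur factor in power sums via \eqref{eq:s<->p}, and invoke Lemma \ref{lem:z} (inverted via $T\overline{T} = \ell\, I$) to substitute $Z^{(p)} = \frac{1-t^\ell}{\ell}\sum_{j} \zeta^{pj}(1-\zeta^j t)^{-1} \sum_i \overline{\zeta^{ij}} X^{(i)}$. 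The plethystic rules (Lemma \ref{lem:ple} together with $p_r$ being a ring homomorphism) then convert each $p_r[Z^{(p)}/(1-t^\ell)]$ into a $j$-indexed linear combination of Poirier power sums with explicit scalar coefficients in $\bbC(t)$.

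Collecting the resulting multi-sum amounts to distributing, for each ``internal'' partition $\sigma^{(p)}$ (coming from the expansion of $s_{\lambda^{(p)}}$), its parts among the $\ell$ color slots $j$. Grouping the color-distribution sum by the composite multipartition $\brho$ (so that $\rho^{(j)}$ consists of all parts assigned to color $j$ across every $\sigma^{(p)}$), the denominators $\prod_i (1-\zeta^j t^{\rho^{(j)}_i})^{-1}$ emerge from the $\zeta^{jr}/(1-\zeta^{jr}t^r)$ factors after the geometric sums over the $\overline{(i-p)}$-exponents in the definition of $Z^{(p)}$ collapse via Lemma \ref{lem:z}. The remaining sum over the internal partition structure, weighted by $\prod_p z_{\sigma^{(p)}}^{-1} \chi^{\lambda^{(p)}}_{\sigma^{(p)}}$ and phase factors $\zeta^{pj\cdot(\text{part})}$, recollapses into $\chi^{\blambda}_{\brho}$ via the Poirier--Specht formula encoded in Definition \ref{defn:poi}. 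I expect this reorganization --- swapping the $\boldsymbol{\sigma}$-summation with the color-distribution sum and recognizing it as the Poirier Frobenius image of $\prod_j s_{\lambda^{(j)}}[\sum_i \overline{\zeta^{ij}}X^{(i)}]$ --- to be the main technical obstacle.

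Part (ii) then follows from (i) by specializing $\blambda = \mathbf{1} = ((n),\emptyset,\ldots,\emptyset)$: the RHS of (i) collapses (using $H_{(n)}(t^\ell) = \prod_{k=1}^n (1-t^{k\ell})$, $H_{\emptyset}(t^\ell)=1$, and $\chi^{\mathbf{1}}_{\brho}=1$) to $\prod_{k=1}^n (1-t^{k\ell})/\prod_{j,i}(1-\zeta^j t^{\rho^{(j)}_i})$. This is precisely the graded character at $w$ (of cycle type $\brho$) of the coinvariant ring $\bbC[\frh]_W$ by Chevalley--Shephard--Todd --- the fundamental degrees of $G(\ell,1,n)$ are $\ell,2\ell,\ldots,n\ell$, and a cycle of length $\rho^{(j)}_i$ of color $j$ contributes the factor $1-\zeta^j t^{\rho^{(j)}_i}$ to $\det(1-tw|_{\frh})$. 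By Definition \ref{defn:fakedeg} this value equals $\sum_{\bmu} f_{\bmu}(t)\chi^{\bmu}_{\brho}$. Equating with (i) for every $\brho$ and inverting via orthogonality of irreducible characters of $C_\ell \wr \frS_n$ yields $K_{\bmu\mathbf{1}}(t,t) = f_{\bmu}(t)$.
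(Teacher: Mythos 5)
Your overall strategy for (i) --- comparing Frobenius images and expanding both sides in the ``Poirier power sums'' $\prod_j p_{\rho^{(j)}}\ple[\sum_i\overline{\zeta^{ij}}X^{(i)}]$ --- is the same as the paper's, and your reduction to a symmetric-function identity is correct. But the implementation has a genuine gap. You expand each Schur factor $s_{\lambda^{(p)}}$ individually in ordinary power sums via \eqref{eq:s<->p}, substitute, and then face the ``color-distribution'' multi-sum which you describe as ``the main technical obstacle'' and leave unresolved. That recombination --- swapping the $\boldsymbol{\sigma}$-sum with the color distribution and recognizing $\chi^{\blambda}_{\brho}$ --- is exactly the content of Poirier's Frobenius inverse formula (Definition \ref{defn:poi} applied to $\chi^{\blambda}$), so as written you would be re-deriving a cited theorem rather than using it. The paper sidesteps this entirely: it applies the abstract Poirier identity
\begin{equation*}
\prod_{p} s_{\lambda^{(p)}}\ple\left[X^{(p)}\right] = \sum_{\bmu} z_{\bmu}\inv\, \chi^{\blambda}_{\bmu}\, \prod_{j} p_{\mu^{(j)}}\ple\left[\sum_i\overline{\zeta^{ij}}X^{(i)}\right]
\end{equation*}
\emph{wholesale}, substitutes $X^{(p)}\mapsto Z^{(p)}/(1-t^\ell)$ on both sides (valid because the identity holds as a polynomial identity in the power sums of each slot), and only then applies Lemma \ref{lem:z} inside the plethystic bracket to pull out the factors $\prod_i(1-\zeta^j t^{\mu^{(j)}_i})^{-1}$. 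If you replace your per-factor expansion with this one-shot substitution, your argument for (i) becomes complete and matches the paper.

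Your proof of (ii), by contrast, is a genuinely different and valid route. The paper specializes $\blambda=\mathbf{1}$ in Definition \ref{defn:H}, expands $G_1(x;t^\ell,t^\ell)[Z^{(0)}]$ via Corollary \ref{cor:sX+Y} and the $\ell=1$ identity $K_{\mu 1}(t,t)=f_\mu(t)$, and re-identifies the coefficient of $s_{\brho}$ as $f_{\brho}(t)$ through the Littlewood--Richardson expansion. You instead evaluate the right-hand side of (i) at $\blambda=\mathbf{1}$, recognize $\prod_{k=1}^n(1-t^{k\ell})/\prod_{j,i}(1-\zeta^j t^{\rho^{(j)}_i})$ as the graded trace of $\bbC[\frh]_W$ at an element of cycle type $\brho$ (Molien series plus the degrees $\ell,2\ell,\dots,n\ell$ of $G(\ell,1,n)$), equate with $\sum_{\bmu}f_{\bmu}(t)\chi^{\bmu}_{\brho}$ by Definition \ref{defn:fakedeg}, and invert by orthogonality. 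This is cleaner in that it bypasses the Littlewood--Richardson combinatorics, at the mild cost of invoking Chevalley--Shephard--Todd and the eigenvalue computation $\det(1-tw|\frh)=\prod_{j,i}(1-\zeta^j t^{\rho^{(j)}_i})$; you should double-check that your convention for $\zeta^j$ versus $\overline{\zeta^j}$ (i.e.\ $\frh$ versus $\frh^*$) is consistent with the one used in the fake-degree definition, since the two differ by complex conjugation.
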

	\begin{proof}
	$(i):$ We look at the preimages of the $G_{\blambda}(x;t,t)$ and $\sum_{\bmu} K_{\bmu\blambda}(t,t) s_{\bmu}$ under the Frobenius character map $\ch$ (see Definition \ref{defn:poi}) and use Lemma \ref{lem:z}.
	\begin{align*}
		G_{\blambda}(x;t,t) &= \prod_{j=0}^{\ell-1} G_{\lambda^{(j)}}(x;t^\ell, t^\ell) \ple \left[Z^{(j)}\right] = \prod_{j=0}^{\ell-1} H_{\lambda^{(j)}}(t^\ell) s_{\lambda^{(j)}} \ple \left[\frac{Z^{(j)}}{1-t^\ell} \right]\\
		&\!\overset{\scriptstyle{\ch}}{=} \sum_{\bmu \in \scrP(\ell,n)} z_{\bmu}\inv \cdot  \left( \prod_{j=0}^{\ell-1} H_{\lambda^{(j)}}(t^\ell) \chi_{\bmu}^{\blambda} \right)  \\
		&\ \ \ \ \ \ \ \ \ \ \ \ \ \ \cdot \prod_{j=0}^{\ell-1} p_{\mu^{(j)}}\hspace{-0.3em} \left[ \frac{\sum_{i=0}^{\ell-1} \overline{\zeta^{i\cdot j}} Z^{(i)}}{1-t^\ell} \right] \\
		&\!\overset{\scriptstyle{\ref{lem:z}}}{=} \sum_{\bmu \in \scrP(\ell,n)} z_{\bmu}\inv \cdot  \left( \prod_{j=0}^{\ell-1} H_{\lambda^{(j)}}(t^\ell) \chi_{\bmu}^{\blambda} \right)  \\
		&\ \ \ \ \ \ \ \ \ \ \ \ \ \ \cdot \prod_{j=0}^{\ell-1} p_{\mu^{(j)}}\hspace{-0.3em} \left[ \frac{\frac{1-t^\ell}{1-\zeta^j t} \cdot \sum_{i=0}^{\ell-1} \overline{\zeta^{i\cdot j}} X^{(i)}}{1-t^\ell} \right] \\
		&= \sum_{\bmu \in \scrP(\ell,n)} z_{\bmu}\inv \cdot  \left( \prod_{j=0}^{\ell-1} H_{\lambda^{(j)}}(t^\ell) \prod_{i=1}^{l(\mu^{(j)})} \left(1-\zeta^j t^{\mu^{(j)}_i}\right)^{-1} \chi_{\bmu}^{\blambda} \right)\\
		&\ \ \ \ \ \ \ \ \ \ \ \ \ \ \cdot \prod_{j=0}^{\ell-1} p_{\mu^{(j)}}\hspace{-0.3em} \left[ \sum_{i=0}^{\ell-1} \overline{\zeta^{i\cdot j}} X^{(i)} \right],
	\end{align*}
	\begin{equation}
		\sum_{\bmu} K_{\bmu\blambda}(t,t) s_{\bmu} = \sum_{\bmu \in \scrP(\ell,n)} z_{\bmu}\inv \cdot   \left( \sum_{\bmu} K_{\bmu\blambda}(t,t) \chi_{\bmu}^{\blambda} \right ) \cdot \prod_{j=0}^{\ell-1} p_{\mu^{(j)}}\hspace{-0.3em} \left[ \sum_{i=0}^{\ell-1} \overline{\zeta^{i\cdot j}} X^{(i)} \right].
	\end{equation}
	The claim now follows from the bijectivity of $\ch$.\\

	$(ii):$ Using $(i)$, we now show that
	\begin{equation}
		G_{\mathbf{1}}(x;t,t) = \sum_{\boldsymbol{\mu}} f_{{\boldsymbol{\mu}}}(t) \cdot s_{\boldsymbol{\mu}} \label{eq:Hfs}
	\end{equation}
	where again $\mathbf{1}$ denotes $((n), \emptyset, ..., \emptyset) \in \scrP(\ell,n)$ which corresponds to the trivial representation. Using Definition \ref{defn:H}  and Lemma \ref{lem:z} for $\blambda = \mathbf{1}$ we get
	\begin{equation}
		\prod_{j=0}^{\ell-1} G_{\lambda^{(j)}} (x;t^\ell,t^\ell) \left[Z^{(j)}\right] = G_1(x,t^\ell,t^\ell)\left[Z^{(0)}\right]
	\end{equation}
	which transforms \eqref{eq:Hfs} into
	\begin{equation}
		G_1(x;t^\ell,t^\ell)\left[Z^{(0)}\right] = \sum_{\boldsymbol{\mu}} f_{{\boldsymbol{\mu}}}(t) \cdot s_{\boldsymbol{\mu}} \left[Z^{(0)}\right]
	\end{equation}
	which we solve by using the definition of the type $A$ Kostka--Macdonald coefficients. For $\brho\in\scrP(\ell,n)$ we use the notation $\alpha(\boldsymbol{\rho})$ for the vector $(|\rho^{(0)}|, ..., |\rho^{(0)}|)$ for which we will take the $b$-invariant as defined in \eqref{eq:binv}. Note that here we implicitly use $\mathbf{ev}_N$ to go between Schur functions and Schur polynomials.

	\begin{align*}
		G_{1}(x;t^m,t^m) \left [Z^{(0)}\right] &= \sum_{\mu} K_{\mu\, 1}(t^m,t^m) \cdot s_\mu \ple \left [Z^{(0)}\right] \\
		&\!\!\overset{\scriptstyle{\eqref{eq:Z}}}{=} \sum_{\mu} K_{\mu\,1}(t^m,t^m) \cdot s_\mu \ple \left [ \sum_{i=1}^{\ell-1}  t^i  X^{(i)} \right] \\
		&\!\overset{\scriptstyle{\ref{cor:sX+Y}}}{=} \sum_{\mu} K_{\mu\,1}(t^m,t^m) \cdot \sum_{\brho} c_{\boldsymbol{\rho}}^\mu \prod_{i=1}^{\ell-1} 	s_{\rho^{(i)}} \ple \left[ t^i X^{(i)} \right ] \\
		&= \sum_{\mu} K_{\mu\,1}(t^m,t^m) \cdot \sum_{\brho} c_{\brho}^\mu \ t^{b(\alpha(\boldsymbol{\rho}))} \cdot \prod_{i=1}^{\ell-1}  s_{\rho^{(i)}} \ple \left[ X^{(i)} \right ] \\
		&= \sum_{\mu} K_{\mu\,1}(t^m,t^m) \cdot \sum_{\boldsymbol{\rho}} c_{\boldsymbol{\rho}}^\mu \ t^{b(\alpha(\boldsymbol{\rho}))}  s_{\boldsymbol{\rho}} \\
		&= \sum_{\boldsymbol{\rho}} \left ( \sum_{\mu} K_{\mu\,1}(t^m,t^m) \ c_{\boldsymbol{\rho}}^\mu \ t^{b(\alpha(\boldsymbol{\rho}))} \right) \cdot s_{\boldsymbol{\rho}}
	\end{align*}

	We now calculate the coefficient of $s_{\boldsymbol{\rho}}$. We have $K_{\mu\,1}(t,t) = f_\mu(t)$ by \cite[VI.8 Ex. 1]{macdonald1988new} and Theorem \ref{thm:fakedeg}. Using this, we get
	\begin{align*}
		\sum_{\mu} &K_{\mu\,1}(t^\ell,t^\ell) \ c_{\boldsymbol{\rho}}^\mu \ t^{b(\alpha(\boldsymbol{\rho}))}\\
		&= t^{b(\alpha(\boldsymbol{\rho}))} \sum_{\mu} K_{\mu\,1}(t^\ell,t^\ell) \ c_{\boldsymbol{\rho}}^\mu \\
		&= t^{b(\alpha(\boldsymbol{\rho}))} \sum_{\mu} f_\mu (t^\ell) \ c_{\boldsymbol{\rho}}^\mu \\
		&= t^{b(\alpha(\boldsymbol{\rho}))} \cdot (1-t^\ell) (1-t^{2\cdot \ell}) \cdots (1-t^{n\cdot \ell}) \cdot \sum_{\mu} s_\mu \ple \left [\frac{1}{1-t^\ell} \right ]\cdot c_{\boldsymbol{\rho}}^\mu \\
		&= t^{b(\alpha(\boldsymbol{\rho}))} \cdot (1-t^\ell) (1-t^{2\cdot \ell}) \cdots (1-t^{n\cdot \ell}) \cdot \prod_{i=1}^{\ell-1} s_{\rho^{(i)}} \ple \left [\frac{1}{1-t^\ell} \right ] \\
		&= f_{\boldsymbol{\rho}}(t)\;.
	\end{align*}

	\end{proof}

	We now have all the tools to prove Theorem \ref{thm:ttKostka}. As in \cite[Thm. 6.4$(ii)$]{gordon2003baby}, we compare the corresponding characters of
	\begin{equation}
		[M(\blambda)]\ \ \text{and} \ \  t^{-b(\blambda)} f_{\blambda}(t) \cdot \sum_{\bmu} K_{\bmu  \blambda}(t,t) [\blambda]\;.
	\end{equation}
	 The second expression is obtained by first decomposing $M(\blambda)$ into $L(\blambda)$ using Proposition \ref{prop:|W|} and Remark \ref{rem:barf}, and then the $L(\blambda)$ into $\bmu$ by putting in the character formula we want to prove. The character of $[M(\blambda)]$ is determined by \eqref{eq:Mchar}.

	 All in all, we must show that for $\brho \in \scrP(\ell,n)$ the equation
	 \begin{equation}
	 	\sum_{\bmu} f_{\bmu}(t) \chi^{\bmu}_{\brho} \chi^{\blambda}_{\brho} = t^{-b(\blambda)} f_{{\blambda}}(t) \sum_{\bmu}  K_{\bmu  \blambda}(t,t) \chi^{\bmu}_{\brho} \label{eq:proofstart}
	 \end{equation}
	holds. We will only manipulate the right hand side of the equation. Start by using Proposition \ref{prop:12}$(i)$ to obtain
	\begin{equation}
		t^{-b({\blambda})} f_{\blambda}(t) \prod_{j=0}^{\ell-1} \left(H_{\lambda^{(j)}}(t^\ell) \prod_{i=0}^{\ell-1} \left(1-\zeta^j t^{\rho^{(j)}_i}\right)^{-1} \right) \chi^{{\blambda}}_{{\brho}}\;.
	\end{equation}
	We plug in the fake degree formula from Theorem \ref{thm:fakedeg} and get
	\begin{equation}
		\prod_{k=1}^n (1-t^{k\cdot \ell}) \prod_{j=0}^{\ell-1} \prod_i \left(1-\zeta^j t^{\rho^{(j)}_i}\right)^{-1} \chi^{{\blambda}}_{{\brho}}. \label{eq:prehook}
	\end{equation}
	We multiply with $\chi_{\brho}^{\mathbf{1}} = 1$, and write the first product as the hook polynomial of $\mathbf{1}$. Expression \eqref{eq:prehook} now reads
	\begin{equation}
	\prod_{j=0}^{\ell-1} \left( H_{\mathbf{1}^{(j)}}(t^\ell)  \prod_i \left(1-\zeta^j t^{\rho^{(j)}_i}\right)^{-1}\right) \chi_{\brho}^{\mathbf{1}} \cdot \chi^{{\blambda}}_{{\brho}}.
	\end{equation}
	We use Proposition \ref{prop:12}$(i)$ for $\blambda=\mathbf{1}$ and get
	\begin{equation}
		\sum_{{\bmu}}  K_{\bmu\mathbf{1}} \chi^{{\bmu}}_{{\brho}} \chi^{{\blambda}}_{{\brho}}\;.
	\end{equation}
	Equation \eqref{eq:proofstart} from the beginning now reduces to
	\begin{equation}
		\sum_{\bmu} f_{\bmu}(t) \cdot \chi^{\bmu}_{\brho} \chi^{\blambda}_{\brho} = \sum_{{\bmu}}  K_{\bmu\mathbf{1}} \cdot \chi^{{\bmu}}_{{\brho}} \chi^{{\blambda}}_{{\brho}}\;,
	\end{equation}
	which follows from Proposition \ref{prop:12}$(ii)$. This concludes the proof of Theorem \ref{thm:ttKostka}.

	\begin{rem}
		The first statement of Proposition \ref{prop:12} completely determines our Kostka--Macdonald coefficients (similar to the traditional case, see \cite[VI.8.17]{macdonald1988new}). Namely, if we take the scalar product with an irreducible character $\chi^{\bmu}$ on both sides of Proposition \ref{prop:12}$(i)$, we obtain
			\begin{equation}
				 K_{\bmu\blambda}(t,t) \  = \sum_{\brho \in \scrP(\ell,n) } z_{\brho}\inv \cdot \prod_{j=0}^{\ell-1} \left ( H_{\lambda^{(j)}}(t^\ell) \prod_{i=0}^{\ell-1} \left(1-\zeta^j t^{\rho^{(j)}_i}\right)^{-1} \right ) \cdot \chi^{\blambda} _{\brho} \  \chi^{\bmu}_{\brho}
			\end{equation}
		\end{rem}

	\subsection{The main theorem} \label{sec:main}

	We show that the characters we constructed are equal to a specialized version of the wreath Macdonald polynomials proposed by Haiman in \cite[Conjecture 7.2.19]{haiman2003combinatorics}. These polynomials are proven to exist in \cite{bezrukavnikov2014wreath}. Here, we index the wreath Macdonald polynomial by their $\ell$-quotient. Note that in general the wreath Macdonald polynomials depend on the choice of an $\ell$-core, but the specialization we work with does not.

	\begin{thm}\label{cor:Haiman}
		For $\blambda \in \scrP(\ell,n)$ let $H_{\blambda}(x;q,t)$ be a wreath Macdonald polynomial of $\blambda$ for some $\ell$-core. We have
		\begin{equation}
			\ch L(\blambda) =  t^{b(\blambda^*)} H_{\blambda}(x;t,t\inv)
		\end{equation}
		where $\ch L(\blambda)$ denotes image of the $W$-character of $L(\blambda)$ under the Frobenius character map.
	\end{thm}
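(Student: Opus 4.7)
The plan is to reduce Theorem~\ref{cor:Haiman} to a purely combinatorial identification of the multisymmetric function $G_{\blambda}(x;t,t)$ with a specialization of Haiman's wreath Macdonald polynomial. Applying the Frobenius character map $\ch$ of Definition~\ref{defn:poi} to both sides of Theorem~\ref{thm:ttKostka}, and using that $\ch[\bmu] = s_{\bmu}$ together with Definition~\ref{defn:ttKostka}, gives immediately
\[
\ch L(\blambda) = \sum_{\bmu\in\scrP(\ell,n)} K_{\bmu\blambda}(t,t)\, s_{\bmu} = G_{\blambda}(x;t,t).
\]
Thus the theorem is equivalent to the combinatorial identity
\[
G_{\blambda}(x;t,t) = t^{b(\blambda^*)}\, H_{\blambda}(x;t,t^{-1}).
\]

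To unpack the left-hand side, Definition~\ref{defn:H} combined with \eqref{eq:G} identifies $G_{\blambda}(x;t,t)$ with the preimage under $\textnormal{\textbf{ev}}_N$ of
\[
\prod_{j=0}^{\ell-1} H_{\lambda^{(j)}}(t^\ell)\cdot s_{\lambda^{(j)}}\!\left[\frac{Z^{(j)}}{1-t^\ell}\right],
\]
that is, a product of Schur functions plethystically evaluated in the twisted sums $Z^{(j)} = \sum_i t^{\overline{(i-j)}}\, X^{(i)}$ of the $\ell$ alphabets. The external input required is the description of $H_{\blambda}(x;t,t^{-1})$ at this specialization: as mentioned in the introduction and made precise in \cite[Sec.~2.3.6]{wen2019wreath}, the wreath Macdonald polynomial at $q=t^{-1}$ collapses to a plethystic transformation of a product of Schur functions indexed by the components of $\blambda$, the plethystic operator being the inverse of the one attached to the virtual character $\sum_i (-t)^i\cdot \operatorname{char}\wedge^i \frh$ of the reflection representation of $C_\ell\wr\frS_n$.

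The main obstacle is the dictionary-building step: I must verify that the inverse of the $\wedge^\bullet \frh$-plethysm in Wen's formulation acts on $(s_{\lambda^{(0)}},\dots,s_{\lambda^{(\ell-1)}})$ by precisely the substitutions $X \mapsto Z^{(j)}/(1-t^\ell)$ appearing in Definition~\ref{defn:H}, and that the normalization---the product of hook factors $H_{\lambda^{(j)}}(t^\ell)$ combined with the shift $t^{b(\blambda^*)}$---comes out correctly. The cyclic action of $C_\ell$ is what produces the $\ell$ different twists $Z^{(j)}$, and diagonalizing this action is essentially the content of Lemma~\ref{lem:z}. Once the dictionary is in place the identity decouples across the $\ell$ components of $\blambda$ and reduces, on each component, to the classical $\ell=1$ identity \eqref{gordon:s_n_2} at parameter $t^\ell$; transporting back via $\textnormal{\textbf{ev}}_N$ then delivers the theorem.
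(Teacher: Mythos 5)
Your opening reduction is exactly right, and identical to the paper's: applying the Frobenius character map to Theorem~\ref{thm:ttKostka} gives $\ch L(\blambda) = G_{\blambda}(x;t,t)$, so the theorem becomes the combinatorial identity $G_{\blambda}(x;t,t) = t^{b(\blambda^*)}H_{\blambda}(x;t,t^{-1})$. You also correctly observe that $G_{\blambda}(x;t,t)$ is, by Definition~\ref{defn:H} and \eqref{eq:G}, the preimage of $\prod_j H_{\lambda^{(j)}}(t^\ell)\,s_{\lambda^{(j)}}[Z^{(j)}/(1-t^\ell)]$, and that Wen's characterization is the external input controlling the right-hand side. Up to this point you are on the paper's track.

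The gap is in how the scalar is pinned down, and your "decoupling" claim hides it. The paper uses that at $q\mapsto t$, $t\mapsto t^{-1}$ Wen's two triangularity conditions collapse to the single containment $H_{\blambda}(x;t,t^{-1}) \in \bbC(t)\cdot\prod_j s_{\lambda^{(j)}}[Z^{(j)}/(1-t^\ell)]$, so both sides of the desired identity lie on the same line over $\bbC(t)$; the scalar is then determined by comparing coefficients of $\mathbf{1}$, using Wen's normalization $\langle H_{\blambda}(x;t,t^{-1}),\mathbf{1}\rangle = 1$ against the Cherednik-side computation $\langle \ch L(\blambda),\mathbf{1}\rangle = t^{b(\blambda^*)}$. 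That last equality is not automatic: it requires knowing that $\mathbf{1}$ occurs in $M(\blambda)=\bbC[\frh]_W\otimes\blambda$ with graded multiplicity $f_{\blambda^*}(t)$, dividing by the $D_\Delta$-entry $\bar f_{\blambda}(t)$ from Proposition~\ref{prop:|W|}, and invoking $\bar f_{\blambda}(t)=\bar f_{\blambda^*}(t)$, which in turn rests on Stembridge's description $\blambda^*=(\lambda^{(0)},\lambda^{(\ell-1)},\dots,\lambda^{(1)})$ and Theorem~\ref{thm:fakedeg}. None of this is produced by treating the $\ell$ components independently and invoking the $\ell=1$ identity on each: the exponent $b(\blambda^*) = b(\alpha(\blambda^*)) + \ell\sum_j b(\lambda^{(j)})$ contains the cross-term $b(\alpha(\blambda^*))$, which depends on all component sizes at once and has no $\ell=1$ analogue. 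Your proposal asserts the normalization "comes out correctly" but never locates where the $\ast$ enters, which is the crux of the statement.
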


	\begin{proof}
		We use the characterization found in \cite[2.3.6]{wen2019wreath} and the statement of Theorem \ref{thm:ttKostka}. When we specialize $H_{\blambda}(x;q,t)$ by $q\mapsto t$ and $t\mapsto t\inv$, the plethysms $\Phi\inv_q$ and $\Phi\inv_{t\inv}$ in \cite[Lemma 2.1]{wen2019wreath} agree and the two conditions \cite[Prop. 2.4(i)+(ii)]{wen2019wreath} reduce to
		\begin{equation}
			H_{\blambda}(x;t,t\inv) \in \bbC(t) \cdot \prod_{j=0}^{\ell-1} s_{\lambda^{(j)}} \left[ \frac{Z^{(j)}}{1-t^\ell} \right] \label{eq:calH<->s}
		\end{equation}
		where the $Z^{(j)}$ are the ones defined in \eqref{eq:Z}. Now, by the third characteristic of wreath Macdonald polynomials in \cite[2.3.6]{wen2019wreath} we have
		\begin{equation}
			\left< H_{\blambda}(x;t,t\inv), \mathbf{1} \right> = 1\;.
		\end{equation}
		Now, the trivial representation appears in $M(\blambda)$ only in the degrees in which the coinvariant space contains the dual representation $\blambda^*$. These degrees are given by $f_{\blambda^*}(t)$. Using \cite[Thm. 4.3]{stembridge1989eigenvalues}, we see that dualizing $\blambda=(\lambda^{(0)}, \lambda^{(1)}, \lambda^{(2)}, ..., \lambda^{(\ell-1)})$ yields the $\ell$-multipartition $\blambda^*=(\lambda^{(0)}, \lambda^{(\ell-1)}, \lambda^{(\ell-2)} ..., \lambda^{(1)})$. This means we have $\bar f_{\blambda}(t) = \bar f_{\blambda^*}(t)$ by Theorem \ref{thm:fakedeg}. Therefore by Proposition \ref{prop:|W|}, the trivial representation appears in $L(\blambda)$ only once, namely in degree $b(\blambda^*)$. Using now \eqref{eq:calH<->s} we have determined $H_{\blambda}(x;t,t\inv)$ uniquely and proven the claim.
	\end{proof}

	\subsection{Type $B_2$ example} \label{sec:B}

	We will give $B_2 = G(2,1,2) = C_2 \wr \frS_2$ as an example. Denote the variables in $\boldsymbol{\Lambda}_N$ by $X=X^{(0)}$ and $Y=X^{(1)}$. The irreducible representations of $B_2$ are parameterized by the bipartitions

	\begin{equation}
		\ytableausetup{boxsize=1em}
		\begin{array}{cccccc}
			\begin{ytableau}
			\empty & \empty \\
 		\end{ytableau},\emptyset &
 		\begin{ytableau}
			\empty \\
			\empty \\
 		\end{ytableau},\emptyset &
 		\begin{ytableau}
			\empty \\
 		\end{ytableau},
 		\begin{ytableau}
			\empty \\
 		\end{ytableau} & \emptyset,
 		\begin{ytableau}
			\empty \\
			\empty \\
 		\end{ytableau} & \emptyset,
 		\begin{ytableau}
			\empty & \empty \\
 		\end{ytableau}\\
 		\\
		((2),\emptyset) & ((1,1),\emptyset) & ((1),(1)) & (\emptyset,(1,1)) & (\emptyset,(2))
		\end{array}
	\end{equation}
	with grade-shifted fake degrees and hook polynomials
	\begin{equation}\label{eq:B2barf}
		\bar f_{\blambda}(t) =\begin{cases}
			1 + t^2 &\text{if} \  \blambda = ((1),(1))\;, \\
			1 & \text{otherwise}
		\end{cases}
	\end{equation}
	\begin{equation}
		H_{\lambda^{(0)}}(t^2) \cdot H_{\lambda^{(1)}}(t^2) =\begin{cases}
			{(1-t^2)}^2 &\text{if} \  \blambda = ((1),(1))\;, \\
			(1-t^2)(1-t^4) & \text{otherwise}
		\end{cases}
	\end{equation}
	The character table of $C_2=\left<g \mid g^2=1\right>$ is equal to
	\begin{equation} \label{eq:C2}
		\begin{array}{c|cc}
			 & 1 & g \\
			 \hline
			\chi_1 & 1 & 1\\
			\chi_2 & 1 & -1
		\end{array}
	\end{equation}
	\noindent which means the polynomial Frobenius character map is defined as

	\begin{equation}
		\chi \mapsto \sum_{\bmu \in \scrP(2,n)} z_{\bmu}\inv \cdot \chi_{\bmu} \cdot   p_{\mu^{(0)}}\hspace{-0.3em} \left[ X+Y \right] \cdot p_{\mu^{(1)}}\hspace{-0.3em} \left[ X - Y \right]\;.
	\end{equation}

	Since the character table of $\frS_2$ is also given by \eqref{eq:C2}, one can work out the Schur polynomials and $G_{\blambda}(x;t,t)$ by hand, the latter of which is given by the formula
	\begin{equation}
		H_{\lambda^{(0)}}(t^2)\, s_{\lambda^{(0)}} \left[\frac{X+tY}{1-t^2}\right] \cdot  H_{\lambda^{(1)}}(t^2)\, s_{\lambda^{(1)}} \left[\frac{tX+Y}{1-t^2}\right]\;.
	\end{equation}

	From the grade-shifted fake degrees \eqref{eq:B2barf} we obtain the decomposition matrix $D_\Delta$. The other two are given by

	\begin{equation}
		{
			\begin{array}{l||r|r|r|r|r|r|}
				 & ( ( 2 ), \emptyset ) & ( ( 1, 1 ), \emptyset ) & ( ( 1 ), ( 1 ) ) & ( \emptyset, ( 2 ) ) & ( \emptyset, ( 1, 1 ) )\\
			\hhline{=::=:=:=:=:=:}
			M(( ( 2 ), \emptyset )) & 1 & t^2 & t^3+t & t^2 & t^4  \\
			\hhline{-||-|-|-|-|-||-|}
			M(( ( 1, 1 ), \emptyset )) & t^2 & 1 & t^3+t & t^4 & t^2 \\
			\hhline{-||-|-|-|-|-||-|}
			M(( ( 1 ), ( 1 ) )) & t^3+t & t^3+t & t^4+2t^2+1 & t^3+t & t^3+t \\
			\hhline{-||-|-|-|-|-||-|}
			M(( \emptyset, ( 2 ) )) & t^2 & t^4 & t^3+t & 1 & t^2 \\
			\hhline{-||-|-|-|-|-||-|}
			M(( \emptyset, ( 1, 1 ) )) & t^4 & t^2 & t^3+t & t^2 & 1 \\
			\hhline{-||-|-|-|-|-|}
			\end{array}}
	\end{equation}\

	\noindent for $C_\Delta$, and

	\begin{equation}\label{eq:B2CL}
		{
			\begin{array}{l||r|r|r|r|r|r|}
				 & ( ( 2 ), \emptyset ) & ( ( 1, 1 ), \emptyset ) & ( ( 1 ), ( 1 ) ) & ( \emptyset, ( 2 ) ) & ( \emptyset, ( 1, 1 ) )\\
			\hhline{=::=:=:=:=:=:}

			L(( ( 2 ), \emptyset )) & 1 & t^2 & t^3+t & t^2 & t^4 \\
			\hhline{-||-|-|-|-|-|}
			L( ( 1, 1 ), \emptyset )) & t^2 & 1 & t^3+t & t^4 & t^2 \\
			\hhline{-||-|-|-|-|-|}
			L(( ( 1 ), ( 1 ) )) & t & t & t^2+1 & t & t \\
			\hhline{-||-|-|-|-|-|}
			L(( \emptyset, ( 2 ) )) & t^2 & t^4 & t^3+t & 1 & t^2 \\
			\hhline{-||-|-|-|-|-|}
			L(( \emptyset, ( 1, 1 ) ) & t^4 & t^2 & t^3+t & t^2 & 1 \\
			\hhline{-||-|-|-|-|-|}
			\end{array}}
	\end{equation}\

	\noindent for $C_L = (K_{\bmu\blambda}(t,t))_{\blambda,\bmu}	$. \\

	For equal Etingof--Ginzburg parameter $\bc=(1,1)$ we have that $C_L$ specializes to the matrix below.
	\begin{equation*}
		\begin{array}{c||c|c|c|c|c|}
		 & ( ( 2 ), \emptyset ) & ( ( 1, 1 ), \emptyset ) & ( ( 1 ), ( 1 ) ) & ( \emptyset, ( 2 ) ) & ( \emptyset, ( 1, 1 ) )\\
			\hhline{=::=|=|=|=|=|}
			L(( ( 2 ), \emptyset )) & 1 & t^2 & t+t^3 & t^2 & t^4 \\
			\hhline{-||-|-|-|-|-|}
			L( ( 1, 1 ), \emptyset )) & 0 & 1 & 0 & 0 & 0 \\
			\hhline{-||-|-|-|-|-|}
			L(( ( 1 ), ( 1 ) )) & t & 0 & 1+t^2 & 0 & t \\
			\hhline{-||-|-|-|-|-|}
			L(( \emptyset, ( 2 ) )) & 0 & 0 & 0 & 1 & 0 \\
			\hhline{-||-|-|-|-|-|}
			L(( \emptyset, ( 1, 1 ) ) & t^4 & t^2 & t+t^3 & t^2 & 1 \\
			\hhline{-||-|-|-|-|-|}
		\end{array}
	\end{equation*}

	\printbibliography

\end{document}